\newtheorem{thm}{Theorem}[section]
\newtheorem{lem}[thm]{Lemma}
\newtheorem{cor}[thm]{Corollary}
\theoremstyle{definition}
\newtheorem{dfn}[thm]{Definition}
\theoremstyle{remark}
\newtheorem{rmk}{Remark}
\newtheorem{example}{Example}
\newcommand{\N}{\mathbb{N}}
\newcommand{\R}{\mathbb{R}}
\newcommand{\bOm}{\mathbf{\Omega}}
\newcommand{\bK}{\mathbf{K}}
\newcommand{\bB}{\mathbf{B}}
\newcommand{\bX}{\mathbf{X}}
\newcommand{\mL}{\mathrm{L}}
\newcommand{\mH}{\mathrm{H}}
\newcommand{\vx}{\mathbf{x}}
\newcommand{\vy}{\mathbf{y}}
\newcommand{\ve}{\mathbf{e}}
\newcommand{\vk}{\mathbf{k}}
\newcommand{\vn}{\mathbf{n}}
\newcommand{\vO}{\mathbf{0}}
\newcommand{\vu}{\mathbf{u}}
\newcommand{\vv}{\mathbf{v}}
\newcommand{\cM}{\mathcal{M}}
\newcommand{\cC}{\mathcal{C}}
\newcommand{\Beta}{\mathrm{B}}
\newcommand{\md}{\mathrm{d}}
\newcommand{\ind}{\mathds{1}}
\renewcommand{\div}{\mathop{\mathrm{div}}}
\newcommand{\grad}{\mathop{\mathrm{grad}}}
\newcommand{\e}{\mathrm{e}}
\newcommand{\norm}[1]{{\left\vert\kern-0.25ex\left\vert\kern-0.25ex\left\vert #1 
    \right\vert\kern-0.25ex\right\vert\kern-0.25ex\right\vert}}
\renewcommand{\emph}[1]{{\it #1}}
\title{Stokes, Gibbs and volume computation of semi-algebraic sets}
\author{Matteo Tacchi$^{1}$, Jean Bernard Lasserre$^{2,3}$, Didier Henrion$^{2,4}$}
\begin{document}

\maketitle
                            
\footnotetext[1]{Laboratoire d'Automatique, École Polytechnique Fédérale de Lausanne (EPFL), Switzerland.}
\footnotetext[2]{LAAS-CNRS, 7 avenue du colonel Roche, 31400 Toulouse, France.}
\footnotetext[3]{Institut de Math\'ematiques de Toulouse,  Universit\'e de Toulouse, France.}
\footnotetext[4]{Faculty of Electrical Engineering, Czech Technical University in Prague, Czechia.}

 \centerline{Corresponding author: Matteo Tacchi, \texttt{matteo.tacchi@epfl.ch}}

\begin{abstract}
We consider the problem of computing the Lebesgue volume of compact basic semi-algebraic sets. In full generality, it can be 
approximated as closely as desired by a converging hierarchy of upper bounds obtained by applying the 
Moment-SOS (sums of squares) methodology to a certain infinite-dimensional linear program (LP). At each step one solves a semidefinite relaxation of the LP which involves 
pseudo-moments up to a certain degree. Its dual computes a polynomial of same degree
which approximates from above the discontinuous indicator function of the set, hence with a typical Gibbs phenomenon
which results in a slow convergence of the associated numerical scheme. Drastic improvements have been observed by introducing in the initial LP
additional linear moment constraints obtained from a certain application of Stokes' theorem for integration
on the set. However and so far there was no rationale to explain this behavior. We provide a refined version of this 
extended LP formulation. When the set is the smooth super-level set of a single polynomial, we show that the
dual of this refined LP has an optimal solution which is a continuous function.
Therefore in this dual
one now approximates a continuous function
by a polynomial, hence with no Gibbs phenomenon, which explains and improves the already observed drastic acceleration of the convergence 
of the hierarchy. Interestingly, the technique of proof involves {recent} results on Poisson's partial differential equation (PDE).
\end{abstract}

\small
\begin{center}
\textbf{Keywords}
\end{center}

Numerical methods for multivariate integration; real algebraic geometry; convex optimization; Stokes' theorem; Gibbs phenomenon

\begin{center}
\textbf{Acknowledgments}
\end{center}

This work benefited from discussions with Swann Marx.

The work of M. Tacchi was funded by the French company R\'{e}seau de Transport d'\'{E}lectricit\'{e}, as well as the Swiss National Science Foundation under the “NCCR Automation” grant n$^{\circ}$51NF40\_{}180545.

The work of J.B. Lasserre was partly funded by the AI Interdisciplinary Institute ANITI through the French “Investing for the Future PIA3” program under the Grant agreement n$^{\circ}$ANR-19-PI3A-0004.

 \normalsize

\section{Introduction}

Consider the problem of computing the Lebesgue volume {$\lambda(\bK)$} of a compact basic semi-algebraic set 
$\bK\subset\R^n$. For simplicity of exposition we will restrict to the case where
$\bK$ is the {smooth} super-level set $\{\,\vx: g(\vx)\geq0\,\}\subset\R^n$ of a single polynomial
$g$.

If $\bK$ is a {convex body} then several 
procedures are available; see e.g. exact deterministic methods for convex polytopes \cite{vinci}, or non deterministic {Hit-and-Run} methods \cite{smith1,smith2} and the more recent \cite{vempala1,vempala2}.
Even {approximating} $\lambda(\bK)$ by deterministic methods 
is still a hard problem as explained in e.g. \cite{vempala2} and references therein. 
In full generality with no specific assumption on $\bK$ such as convexity, the only general method available is Monte-Carlo, that is, one samples $N$ points
according to Lebesgue measure $\lambda$ normalized on a simple set $\bB$ (e.g. a box or an ellipsoid) that contains $\bK$.
If $\rho_N$ is the proportion of points that fall into $\bK$ then the random variable 
$\rho_N\:\lambda(\bB)$ provides a good estimator of $\lambda(\bK)$
with convergence guarantees as $N$ increases. However this estimator is non deterministic and neither provides
a lower bound nor an upper bound on $\lambda(\bK)$. 

When $\bK$ is a compact basic semi-algebraic set,
a deterministic numerical scheme described in \cite{hls09} provides a sequence $(\tau_k)_{k\in\N} \subset \R$ of upper bounds that converges to $\lambda(\bK)$ as $k$ increases. Briefly,
\begin{eqnarray}
\label{intro:eq-dual}
\lambda(\bK)&=&\displaystyle\inf_{p\in\R[\vx]}\,\left\{\,\int p\,d\lambda \::\: p\geq \ind_\bK\mbox{ on $\bB$}\,\right\}\\
\label{intro:eq-dual-k}
\tau_k &=&\displaystyle\inf_{p\in\R[\vx]_k}\,\left\{\,\int p\,d\lambda \::\: p\geq \ind_\bK\mbox{ on $\bB$}\,\right\},
\end{eqnarray}
with $\vx\mapsto \ind_\bK(\vx)=1$ if $\vx\in\bK$ and $0$ otherwise. One can notice that minimizing sequences for \eqref{intro:eq-dual} and \eqref{intro:eq-dual-k} also minimize the $L^1(\bB,\lambda)$-norm $\Vert p - \ind_\bK \Vert_1$ (with convergence to $0$ in the case \eqref{intro:eq-dual}). 
As the upper bound $\tau_k> \lambda(\bK)$ is obtained by restricting the search in \eqref{intro:eq-dual-k} to polynomials of degree at most $k$,
the infimum is attained 
and an optimal solution can be obtained by
solving a semidefinite program. Of course, 
the size of the resulting semidefinite program increases with the degree $k${: this is the so-called Moment-SOS hierarchy};
for more details the interested reader is referred to \cite{hls09}.

{ Also focusing on compact semi-algebraic sets, \cite{moab} proposes a symbolic method to compute the volume of $\bK$ with absolute precision $2^{-p}$, in time $O(p\log (p)^{3+\varepsilon})$ for any $\varepsilon > 0$ as $p \to \infty$. This is in sharp contrast with the approach considered here, which consists in approximating problem \eqref{intro:eq-dual} with the sequence of problems \eqref{intro:eq-dual-k} indexed by $k$. Indeed,
\begin{itemize}
\item In \cite{hls09}, for any $k \in \N$, $\tau_k$ is guaranteed to be a converging upper bound for $\lambda(\bK)$, i.e. $\tau_k - \lambda(\bK) > 0$, while  \cite{moab} also guarantees convergence of the approximant to $\lambda(\bK)$ but gives no information on the sign of the difference between the two quantities.
\item \cite{moab} uses symbolic computations that can achieve arbitrary precision, while \cite{hls09} uses numerical computations based on semidefinite programming, limited to floating-point arithmetic precision.
\item The approach of \cite{moab} can be used  to approximate other quantities than the volume, namely real periods of algebraic surfaces. The approach of \cite{hls09} was extended to approximate sets relevant in systems control, such as regions of attraction or maximal positively invariant sets (see e.g. \cite{hk14}). In this context, the present contribution can help improving the Moment-SOS hierarchy for assessing the stability of polynomial differential systems.
\end{itemize}

When solving problem \eqref{intro:eq-dual-k}}, clearly a Gibbs phenomenon\footnote{
The Gibbs phenomenon appears at a jump discontinuity when one numerically approximates a piecewise $C^1$ function with a polynomial function, e.g. by its Fourier series; see e.g. \cite[Chapter 9]{t13}.}
takes place as one tries to approximate on $\bB$ and from above,
the discontinuous function $\ind_\bK$ by a polynomial of degree at most $k$. This makes 
the convergence of the upper bounds $\tau_k$ very slow (even for modest dimension problems). A trick was used 
in \cite{hls09} to accelerate this convergence but at the price of loosing monotonicity of the resulting sequence.

In fact \eqref{intro:eq-dual} is a dual of the following infinite-dimensional Linear program (LP) on measures
\begin{equation}
\label{intro:eq-primal}
\sup_{\mu} \;\{\,\mu(\bK) \::\:\mu\,\leq\,\lambda\,;\: \mu\in \cM(\bK)_+\,\}
\end{equation}
(where $\cM(\bK)_+$ is the space of finite Borel measures on $\bK$). Its optimal value is also $\lambda(\bK)$
and is attained at the unique optimal solution $\mu^\star := \lambda_\bK { = \ind_\bK \, \lambda}$ (the restriction of $\lambda$ to $\bK$).

{\bf A simple but key observation.} As one knows the unique optimal solution $\mu^\star=\lambda_{\bK}$ of \eqref{intro:eq-primal},
{any} constraint satisfied by $\mu^\star$ 
(in particular, linear constraints) can be included as a constraint on $\mu$ in \eqref{intro:eq-primal} without changing 
the optimal value and the optimal solution. While these constraints 
provide additional {restrictions} in \eqref{intro:eq-primal}, they
translate into additional {degrees of freedom} in the dual 
(hence a {relaxed} version of \eqref{intro:eq-dual}), and therefore
better approximations when passing to the finite-dimensional relaxed version of \eqref{intro:eq-dual-k}. A first set 
of such linear constraints experimented in \cite{l17} and later in 
\cite{lm20}, resulted in drastic improvements but with no clear rationale behind
such improvements.

{\bf Contribution.}
The main message and result of this paper is that {there is an appropriate 
set of additional linear constraints on $\mu$ in \eqref{intro:eq-primal} 
such that the resulting dual (a relaxed version of \eqref{intro:eq-dual}) has an explicit {continuous} optimal solution 
with value $\lambda(\bK)$. These additional linear contraints (called Stokes constraints) come from
an appropriate modelling of Stokes' theorem for integration over $\bK$,
a refined version 
of that in \cite{l17}.
Therefore the optimal continuous solution can be approximated efficiently
by polynomials with no Gibbs phenomenon, by the hierarchy of semidefinite relaxations defined in \cite{hls09} (adapted to these new linear constraints).} 
Interestingly, the technique of proof and the construction of the optimal solution invoke results from the field of elliptic partial differential equations (PDE), namely {a recent extension of standard Schauder estimates from Dirichlet problems to Neumann formulations}.

{\bf Outline.} In Section \ref{sec:volume} we recall the primal-dual linear formulation of the volume problem, and we explain why the dual value is not attained, which results in a Gibbs phenomenon. In Section \ref{sec:stokes} we revisit the acceleration strategy based on Stokes' theorem, with the aim of introducing in Section \ref{sec:newstokes} a more general acceleration strategy and a new primal-dual linear formulation of the volume problem. Our main result, attainment of the dual value in this new formulation, is stated as Theorem \ref{main} at the end of Section \ref{sec:newstokes}. The drastic improvement in the convergence to $\lambda(\bK)$ 
is illustrated on {various} simple example{s}.

\section{Linear reformulation of the volume problem}\label{sec:volume}

Consider a compact basic semi-algebraic set
$$ \bK := \{ \vx \in \R^n : g(\vx) \geq 0 \} $$
with $g \in \R[\vx]$. We suppose that $\bK \subset \bB$ where $\bB$ is a compact basic semi-algebraic set for which we know the moments $\int_\bB \vx^\vk \; \md\vx$ of the Lebesgue measure $\lambda_\bB$, where $\vx^\vk := x^{k_1}_{1} x^{k_2}_{2} \cdots x^{k_n}_{n}$ denotes a multivariate monomial of degree $\vk \in \N^n$. We assume that 
$$\bOm := \{\vx \in \R^n : g(\vx) > 0\}$$
is a nonempty open set with closure\footnote{Whereas $\bK$ is a rather standard notation for a compact semi-algebraic set in polynomial optimization, the addition of the notation $\bOm$ is motivated by the conventions used for open sets in the differential geometry and PDE analysis literature. To account for both uses, we denote by $\bOm$ the interior of $\bK$.} $$\overline{\bOm} = \bK,$$
and that  its boundary $$\partial \bOm = \partial \bK = \bK \setminus \bOm$$ is $C^1$ in the sense that it is locally the graph of a continuously differentiable function.
We want to compute the Lebesgue volume of $\bK$, i.e., the mass of the Lebesgue measure $\lambda_\bK$:
$$\lambda(\bK) := \int_{\bK} d\vx = \int_{\R^n} d\lambda_\bK(\vx).$$

If $\bX \subset \R^n$ is a compact set, denote by $\cM(\bX)$ the space of signed Borel measures on $\bX$, which identifies with the topological dual of $C^0(\bX)$, the space of continuous functions on $\bX$.
Denote by $\cM(\bX)_+$ the convex cone of non-negative Borel measures on $\bX$, and by  $C^0(\bX)_+$ the convex cone of non-negative continuous functions on $\bX$.

In \cite{hls09} a sequence of upper bounds converging to $\lambda(\bK)$ is obtained by applying the Moment-SOS hierarchy 
\cite{l10} (a family of finite-dimensional convex relaxations) to approximate as closely as desired
the (primal) infinite-dimensional LP on measures:
\begin{align}
\max_{\mu} \; & \mu(\bK) \label{pl:primal} \\
\mathrm{s.t.} \; & \mu \in \cM(\bK)_+ \nonumber \\
& \lambda_\bB - \mu \in \cM(\bB)_+ \nonumber
\end{align}
whose optimal value is $\lambda(\bK)$, attained for $\mu^\star := \lambda_\bK$. 
The LP \eqref{pl:primal} has an infinite-dimensional LP dual on {continuous} functions which reads:
\begin{align}
\inf_{w} \; & \int_\bB w \; d\lambda \label{pl:dual} \\
s.t. \; & w \in C^0(\bB)_+ \nonumber \\
&  w\vert_\bK - 1 \in C^0(\bK)_+. \nonumber
\end{align}
Observe that \eqref{pl:dual} consists of approximating the discontinuous indicator function $\ind_\bK$ (equal to one on $\bK$ and zero elsewhere) from above by continuous functions $w$, in minimizing the $L^1(\bB)$-norm $\Vert w-\ind_{\bK}\Vert_1$. Clearly
the infimum $\lambda(\bK)$ is {not} attained.

Since $\bK$ is generated by a polynomial $g$, and measures on compact sets are uniquely determined by their moments,
one may apply the Moment-SOS hierarchy \cite{l10} for solving \eqref{pl:primal}. The moment relaxation of
\eqref{pl:primal} consists of replacing $\mu$ by finitely many of its moments $\vy$, say up to degree $d \in \N$. Then the cone of moments is relaxed by a linear slice of the semidefinite cone constructed from so-called moment and localizing matrices indexed by $d$, as defined in e.g. \cite{l10}, and which defines a semidefinite program. Therefore the dual of this 
semidefinite program (i.e., the dual SOS-hierarchy) is a {strengthening} of \eqref{pl:dual} where

(i) continuous functions $w$ are replaced with polynomials of increasing degree $d$,
and

(ii) nonnegativity constraints are replaced with Putinar's SOS-based certificates of positivity \cite{p93} which translate to
 semidefinite constraints on the coefficients of polynomials; again the interested reader is referred to \cite{hls09,l10} for more details.
 
 For each fixed degree $d$, a valid upper bound on $\lambda(\bK)$ is computed by solving a primal-dual pair of convex semidefinite programming problems (not described here). As proved in \cite{hls09} by combining Stone-Weierstrass' theorem and Putinar's Positivstellensatz \cite{p93},
 
(i) there is no duality gap between each primal semidefinite relaxation of the hierarchy and its dual, and

(ii) the 
resulting sequence of upper bounds converges to $\lambda(\bK)$ as $d$ increases.

The main drawback of this numerical scheme is its typical slow convergence, observed already for very simple univariate examples, see e.g. \cite[Figs. 4.1 and 4.5]{hls09}. The best available theoretical convergence speed estimates are also pessimistic, with an asymptoptic rate of $\log\log d$ \cite{kh18}. Slow convergence is mostly due to the so-called Gibbs phenomenon which is well-known in numerical analysis \cite[Chapter 9]{t13}.
Indeed, as already mentioned, solving \eqref{pl:dual} numerically amounts to approximating the discontinuous function $\ind_\bK$ from above with polynomials of increasing degree, which generates oscillations and overshoots and slows down the convergence, see e.g. \cite[Figs. 4.2, 4.4, 4.6, 4.7, 4.10, 4.12]{hls09}.

\begin{example}\label{ex:gibbs}
Let $\bK:=[0,1/2] \subset \bB:=[-1,1]$. In Figure \ref{fig:gibbs} is displayed the degree-10 and degree-20 polynomials $w$ obtained by solving the dual of SOS {strengthenings} of problem \eqref{pl:primal}. We can clearly see  bumps, typical of a Gibbs phenomenon at points
of discontinuity.
\begin{figure}[h]\centering
\includegraphics[width=0.4\textwidth]{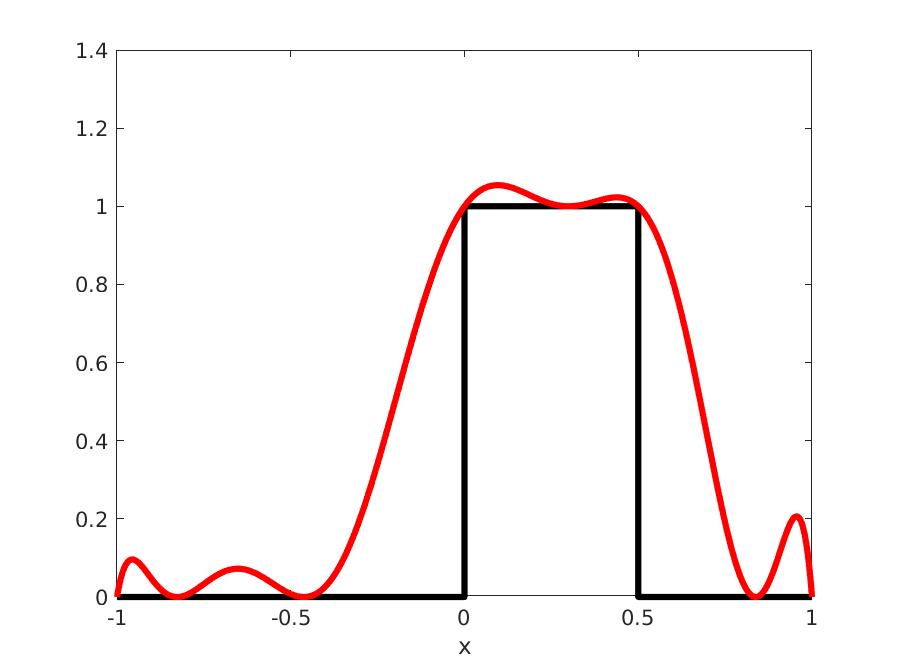}
\includegraphics[width=0.4\textwidth]{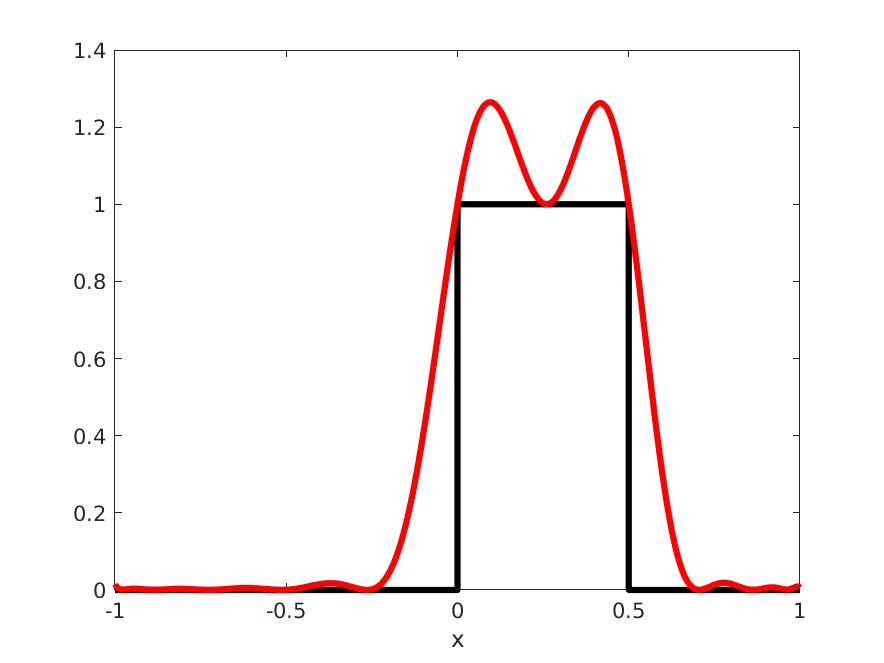}
\caption{Gibbs effect occurring when approximating from above with a polynomial of degree 10 (left red curve) and 20 (right red curve) the indicator function of an interval (black curve).\label{fig:gibbs}}
\end{figure}
\end{example}

An idea to bypass this limitation consists of adding certain linear constraints to the finite-dimensional semidefinite relaxations,
to make their optimal values larger and so closer to the optimal value $\lambda(\bK)$.
Such linear constraints must be chosen appropriately:

(i) they must be {redundant} for the infinite-dimensional moment LP
on measures \eqref{pl:primal}, and

(ii) become {active} for its finite-dimensional relaxations. 

This is the heuristic proposed in \cite{l17} to accelerate the Moment-SOS hierarchy for evaluating transcendental integrals on semi-algebraic sets. 
These additional linear constraints on the moments $\vy$ of $\mu^\star$ are obtained from
an application of Stokes' theorem for integration on $\bK$, a classical result in differential geometry.
It has been also observed experimentally that this heuristic accelerates significantly the convergence of the hierarchy 
in other applied contexts, e.g.  in chance-constrained optimization problems \cite{w18}.

\section{Introducing Stokes constraints} \label{sec:stokes}

In this section we explain the heuristic introduced in \cite{l17} to accelerate convergence of the Moment-SOS hierarchy by adding linear constraints on the moments of $\mu^\star$. These linear constraints are obtained  from a certain application of Stokes' theorem for integration on $\bK$.

\subsection{Stokes' Theorem and its variants}

\begin{thm}[Stokes' Theorem]
Let $\bOm \subset \R^n$ be a $C^1$ open set\footnote{An open set is said to be $C^1$ if its boundary is locally the graph of a $C^1$ function (up to reordering coordinates and changing orientation, see e.g.~\cite[Section C.1.]{e10}).} {with closure $\bK$}. For any $(n-1)$-differential form $\omega$ on ${\bK}$, it holds $$\displaystyle\int_{\partial\bOm} \omega = \int_\bOm d\omega.$$
\end{thm}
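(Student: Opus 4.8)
The plan is the textbook localization argument, with attention paid to the low ($C^1$) regularity of the boundary. First I would cover the compact set $\bK$ by finitely many open balls: one ball $U_0$ with $\overline{U_0}\subset\bOm$, together with balls $U_1,\dots,U_N$ centered at boundary points in which, after a rigid motion, $\bOm$ is the subgraph $\{x\in U_i:\ x_n<h_i(x')\}$ of a $C^1$ function $h_i$ of $x'=(x_1,\dots,x_{n-1})$ --- this is exactly the hypothesis that $\partial\bOm$ is $C^1$. Picking a smooth partition of unity $(\varphi_i)_{i=0}^N$ subordinate to this cover and writing $\omega=\sum_i\varphi_i\,\omega$, both sides of the claimed identity are additive in $\omega$, so it suffices to prove it when $\omega$ is replaced by a single $\varphi_i\,\omega$. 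This leaves two local models: (a) an $(n-1)$-form compactly supported inside $\bOm$, and (b) an $(n-1)$-form compactly supported in a chart where $\bOm$ is a $C^1$ subgraph.

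Case (a) is immediate: the boundary integral vanishes, and expanding $\varphi_0\,\omega=\sum_{j=1}^n a_j\,dx_1\wedge\cdots\wedge\widehat{dx_j}\wedge\cdots\wedge dx_n$ gives $\int_{\bOm}d(\varphi_0\,\omega)=\int_{\R^n}\big(\sum_{j}(-1)^{j-1}\partial_j a_j\big)\,dx=0$ by Fubini and the fundamental theorem of calculus applied in each variable separately, since each $a_j$ is compactly supported. For case (b) I would parametrize the boundary piece by $x'\mapsto(x',h_i(x'))$, compute $\int_{\partial\bOm}\varphi_i\,\omega$ as the integral over $\R^{n-1}$ of the pullback of $\varphi_i\,\omega$ (which produces the coefficients $a_j(x',h_i(x'))$ together with factors $\partial_j h_i$), and separately compute $\int_{\bOm\cap U_i}d(\varphi_i\,\omega)$ by integrating first in $x_n$ over $(-\infty,h_i(x'))$ and then in $x'$: the $j=n$ term yields $a_n(x',h_i(x'))$ directly from the fundamental theorem of calculus, while each term with $j<n$ forces exchanging $\partial_j$ with an integral whose upper endpoint depends on $x_j$, and this generates precisely the $\partial_j h_i$ factors seen on the boundary side. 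Matching the two computations on each $U_i$ and summing over $i$ finishes the proof.

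The hard part is not the bookkeeping in these two local models but the regularity: with $\partial\bOm$ only $C^1$, the flattening map $x\mapsto(x',x_n-h_i(x'))$ is merely a $C^1$ diffeomorphism, so the pullback of $\omega$ need not have a classical exterior derivative, and the ``differentiation under the integral sign'' used in case (b) must be justified for integrands that are only continuous in $x_j$. The clean way I would handle this is by approximation within each chart: prove case (b) first for a smooth defining function $h_i$ and a smooth form, where every manipulation above is unambiguous, then approximate $h_i$ in the $C^1$ norm by smooth $h_i^{(k)}$ and the coefficients of $\varphi_i\,\omega$ in the $C^1$ norm by smooth ones. Both sides of the local identity are continuous under this limit: the volume side because the exterior derivative converges uniformly on $\bK$ and the subgraphs converge up to a set of arbitrarily small measure, and the boundary side because the pulled-back integrand over $\R^{n-1}$ --- a polynomial expression in the values of the coefficients along the graph and in the first derivatives of $h_i$ --- depends continuously on $(h_i^{(k)},\omega)$ in the $C^1$ topology. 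Passing to the limit yields case (b) in the stated generality, and it is this approximation step, rather than the Stokes computation itself, where the actual work sits.
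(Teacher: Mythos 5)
Your argument is a correct, self-contained proof, but the paper does not actually prove this statement: it treats the theorem (and the Gauss formulas derived from it) as a citation, namely to Theorem 6.10.2 of Hubbard and Burke Hubbard's \emph{Vector Calculus, Linear Algebra and Differential Forms}. So the comparison is between a reference and a full argument. What you write is the standard localization proof --- finite cover of $\bK$ by one interior ball and finitely many boundary charts where $\bOm$ is a $C^1$ subgraph, partition of unity, the interior case by Fubini and the fundamental theorem of calculus, and the boundary case by integrating first in $x_n$ and differentiating the resulting $x'$-integral with variable upper limit $h_i(x')$, which produces the $\partial_j h_i$ factors matching the graph parametrization of $\int_{\partial\bOm}\varphi_i\,\omega$. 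Your care about the $C^1$ (rather than smooth) boundary is well placed, though arguably the approximation step you present as the crux is avoidable: the Leibniz rule for $x'\mapsto\int_{-\infty}^{h_i(x')}a_j(x',x_n)\,dx_n$ already holds classically when $h_i$ is $C^1$ and the coefficients $a_j$ are $C^1$, so the direct computation in each chart goes through without smoothing; the approximation argument is a valid alternative and is needed only if one insists on flattening the boundary or on weaker regularity of $\omega$. What your route buys is a proof at exactly the regularity the paper assumes ($C^1$ boundary, which is all that is needed for the Gauss formula \eqref{eq:gaussform} used throughout Section \ref{sec:proof}); what the paper's route buys is brevity, since the result is entirely classical and only its corollaries are used downstream.
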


\begin{cor}
In particular, for $\vu \in C^1({\bK})^n$ and $\omega(\vx) = \vu(\vx) \cdot \vn_\bOm(\vx) \; d\sigma(\vx)$, where the dot is the inner product, $\sigma$ is the surface or Hausdorff measure on $\partial \bOm$ and $\vn_\bOm$ is the outward pointing normal to $\partial \bOm$, we obtain the Gauss formula
\begin{equation} \label{eq:gaussform}
\int_{\partial\bOm} \vu(\vx) \cdot \vn_\bOm(\vx) \; d\sigma(\vx) = \int_\bOm \div \vu(\vx) \; d\vx.
\end{equation}
With the choice $\vu(\vx) := u(\vx) \ \ve_i$ where $u \in C^1({\bK})$ and $\ve_i$ is the vector of $\R^n$ with one at entry $i$ and zeros elsewhere, for $i=1,\ldots,n$, we obtain the dual Gauss formula
\begin{equation} \label{eq:stokesthm}
\int_{\partial\bOm} u(\vx) \ \vn_\bOm(\vx) \; d\sigma(\vx) = \int_\bOm \grad u(\vx) \; d\vx.
\end{equation}
\end{cor}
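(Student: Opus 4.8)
The plan is to exhibit, for each vector field $\vu\in C^1(\bK)^n$, an explicit $(n-1)$-form $\omega$ on $\bK$ whose exterior derivative is the Euclidean volume form weighted by $\div\vu$ and whose pullback to $\partial\bOm$ is the scalar density $(\vu\cdot\vn_\bOm)\,d\sigma$; then \eqref{eq:gaussform} is nothing but Stokes' Theorem applied to this $\omega$, and \eqref{eq:stokesthm} follows by specialization.

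First I would take $\omega = \iota_{\vu}(dx_1\wedge\cdots\wedge dx_n)$, the contraction of the volume form with $\vu$, i.e.
$$\omega \;=\; \sum_{i=1}^n (-1)^{i-1}\, u_i\; dx_1\wedge\cdots\wedge\widehat{dx_i}\wedge\cdots\wedge dx_n .$$
Since each $u_i\in C^1(\bK)$, this is a $C^1$ form on $\bK$, so $d\omega$ is a well-defined continuous $n$-form and Stokes' Theorem applies. A direct computation (expanding $du_i$ and reordering the wedge factors, the two sign factors $(-1)^{i-1}$ cancelling) gives $d\omega=\big(\sum_{i=1}^n \partial_i u_i\big)\,dx_1\wedge\cdots\wedge dx_n=\div\vu\;d\vx$, which handles the right-hand side of \eqref{eq:gaussform}. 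For the left-hand side I would invoke the standard identification of the boundary integrand: writing $j:\partial\bOm\hookrightarrow\bK$ for the inclusion and using a local $C^1$ graph parametrization of $\partial\bOm$ with the boundary orientation induced by the outward normal, one has $j^*\omega=(\vu\cdot\vn_\bOm)\,d\sigma$, where $d\sigma$ is the $(n-1)$-dimensional Hausdorff measure on $\partial\bOm$. Plugging these two identities into $\int_{\partial\bOm}\omega=\int_\bOm d\omega$ yields \eqref{eq:gaussform}.

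For the dual Gauss formula \eqref{eq:stokesthm} I would simply apply \eqref{eq:gaussform} to the particular vector field $\vu=u\,\ve_i$ with $u\in C^1(\bK)$: then $\div\vu=\partial_i u$ and $\vu\cdot\vn_\bOm=u\,(\vn_\bOm)_i$, so
$$\int_{\partial\bOm} u\,(\vn_\bOm)_i\,d\sigma=\int_\bOm \partial_i u\,d\vx$$
holds for each $i=1,\dots,n$; assembling these $n$ scalar identities into one vector equation gives $\int_{\partial\bOm} u\,\vn_\bOm\,d\sigma=\int_\bOm\grad u\,d\vx$, which is \eqref{eq:stokesthm}.

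The only genuinely delicate point is the boundary identity $j^*\omega=(\vu\cdot\vn_\bOm)\,d\sigma$, and in particular checking that it remains valid under the mere $C^1$ regularity of $\partial\bOm$ (rather than smoothness) and with $\vu$ only of class $C^1$ up to the boundary. This is exactly what the low-regularity version of Stokes' Theorem quoted above — together with the local graph description of a $C^1$ boundary and a careful fixing of orientation conventions so that $\vn_\bOm$ is truly outward pointing — is designed to cover; granting that, the remaining computations are routine, and compactness of $\bK$ ensures all integrals are finite.
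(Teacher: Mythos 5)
Your proof is correct and is essentially the paper's intended argument: the paper disposes of this corollary by citing \cite[Theorem 6.10.2]{hb07}, and your explicit derivation --- taking $\omega=\iota_{\vu}(dx_1\wedge\cdots\wedge dx_n)$, computing $d\omega=\div\vu\;dx_1\wedge\cdots\wedge dx_n$, identifying $j^*\omega=(\vu\cdot\vn_\bOm)\,d\sigma$, and then specializing to $\vu=u\,\ve_i$ --- is precisely the standard computation behind that citation. You also correctly isolate the only delicate point (the boundary identification under mere $C^1$ regularity), which is exactly what the cited reference is invoked to cover.
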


\begin{proof}
These are all particular cases of \cite[Theorem 6.10.2]{hb07}.
\end{proof}

\subsection{Original Stokes constraints}

Associated to a sequence $\vy=(y_\vk)_{\vk \in \N^n} \in \R^{\N^n}$, introduce the Riesz linear functional $\mL_\vy : \R[\vx] \rightarrow \R$ which acts on a polynomial $p := \sum_{\vk} p_\vk \ \vx^\vk \in \R[\vx]$ by $\mL_\vy(p) := \sum_{\vk} p_\vk \ y_\vk$.
Thus, if $\vy$ is the sequence of moments of $\lambda_\bK$, i.e. $y_\vk:=\int_\bK \vx^\vk d\vx$ for all $\vk \in \N^n$, then $\mL_\vy(p) = \int_\bK p(\vx) d\vx$ and by \eqref{eq:stokesthm} with $u(\vx) := \vx^\vk g(\vx)$:
\begin{align*}
\mL_{\vy}(\grad(\vx^\vk g)) \ & = \int_\bK \grad(\vx^\vk g(\vx)) \; d\vx \\
& = \int_{\partial \bK} \vx^\vk g(\vx) \ \vn_\bK(\vx) \; d\sigma(\vx) \,=\,0,
\end{align*}
since by construction $g$ vanishes on $\partial \bK$. Thus while in the infinite-dimensional LP \eqref{pl:primal} one may add the linear constraints 
\[\int _{\bK}\grad(\vx^\vk g)\,d\mu\,=\,0\,\qquad\forall \vk\in\N^n,\]
without changing its optimal value $\lambda(\bK)$, on the other hand inclusion of the linear moment constraints
\begin{equation} \label{eq:momstokes}
\mL_\vy(\grad(\vx^\vk g)) = 0\,,\quad \vert\vk\vert\leq 2d+1-{\rm deg}(g)
\end{equation}
in the moment relaxation with pseudo-moments $\vy$ of degree at most $d$, will decrease the optimal value
of the initial relaxation.

In practice, it was observed that adding constraints \eqref{eq:momstokes} dramatically speeds up the convergence of the Moment-SOS hierarchy, see e.g. \cite{l17,w18}. One main goal of this paper is to provide a qualitative mathematical rationale behind
this phenomenon.

\subsection{Infinite-dimensional Stokes constraints}

In \cite{twlh19}, Stokes constraints were formulated in the infinite-dimensional setting, and a dual formulation was obtained in the context of the volume problem. Using \eqref{eq:gaussform} with $\vu = g \vv$ (which vanishes on $\partial \bK$) and $\vv \in C^1(\bK)^n$ arbitrary, yields:
$$ \int_\bK (\grad g(\vx)  \cdot \vv(\vx) + g(\vx) \div \vv(\vx)) \; \md\vx = \int_{\partial\bK}g \vv \ \vn_\bK \ d\sigma = 0\,,$$
which can be written equivalently (in the sense of distributions) as $$(\grad g) \lambda_\bK - \grad(g\lambda_\bK) = 0\,.$$
This allows to rewrite problem \eqref{pl:primal} as
\begin{align}
\max_\mu \; & \mu(\bK) \label{pl:primstokes} \\
s.t. \; & \mu \in \cM(\bK)_+ \nonumber \\
& \lambda_\bB - \mu \in \cM(\bB)_+ \nonumber \\
& (\grad g) \mu - \grad (g \mu) = 0 \nonumber
\end{align}
without changing its optimal value $\lambda(\bK)$ attained at $\mu^\star = \lambda_\bK$.

Using infinite-dimensional convex duality as in e.g. the proof of Theorem 2 in \cite{hk14},
the dual of LP \eqref{pl:primstokes} reads
\begin{align}
\inf_{\vv,w} \; & \int_\bB w \; d\lambda \label{pl:dualstokes} \\
s.t. \; & \vv \in C^1(\bK)^n \nonumber \\
& w \in C^0(\bB)_+ \nonumber \\ 
& w\vert_\bK - \div(g\vv) - 1 \in C^0(\bK)_+. \nonumber
\end{align}
{\bf Crucial observation.} Notice that $w$ in {\eqref{pl:dualstokes}} is {not} required to approximate $\ind_\bK$ from above anymore. Instead, it should approximate $1 + \div(g\vv)$ on $\bK$ and $0$ outside $\bK$. Hence, provided that $1+\div(g\vv) = 0$ on $\partial\bK$, $w$ might be a continuous function for some well-chosen $\vv \in C^1(\bK)^n$, and therefore an optimal solution 
of \eqref{pl:dualstokes} (i.e., the infimum is a minimum). As a result, the Gibbs phenomenon would disappear
and convergence would be faster. 

{The issue is then to determine whether the infimum in \eqref{pl:dualstokes} is attained or not. And if not, are there 
other special features of problem \eqref{pl:dualstokes} that can be exploited to yield
more efficient semidefinite relaxations ?}

\section{New Stokes constraints and main result}\label{sec:newstokes}

In the previous section, the Stokes constraint
$$ \int_\bK (\vv(\vx) \cdot \grad g(\vx) + g(\vx) \div \vv(\vx))\; d\mu(\vx) = 0$$
or equivalently (in the sense of distributions)
\begin{equation}\label{oldstokes}
(\grad g) \mu - \grad (g \mu) = 0
\end{equation}
(with $\mu \in \cM(\bK)_+$ being the Lebesgue measure on $\bK$)
was obtained as a particular case of Stokes' theorem with $\vu=g\vv$ in \eqref{eq:gaussform}. 
Instead, we can use a more general version with $\vu$ not in factored form, and also use the fact that $\forall \vx \in \partial \bK$, $0 \neq \grad g(\vx) = - |\grad g(\vx)| \ \vn_\bK(\vx)$ (here $|\vy| := \sqrt{\vy\cdot\vy}$ is the $n$-dimensional Euclidean norm), to obtain
$$\int_{\bK} \div \vu(\vx)\; d\mu(\vx) = - \int_{\partial \bK} \vu(\vx) \cdot \grad g(\vx)\; d\nu(\vx)\,,$$
or equivalently (in the sense of distributions)
\begin{equation}\label{newstokes}
\grad \mu = (\grad g) \nu\,,
\end{equation}
with $\mu \in \cM(\bK)_+$ being the Lebesgue measure on $\bK$ and $\nu \in \cM(\partial\bK)_+$ 
being the measure having density $1/|\grad g(\vx)|$ with respect to the $(n-1)$-dimensional Haussdorff measure $\sigma$ on $\partial\bK$.
The same linear equation was used in \cite{lm20} to compute moments of the Hausdorff measure.
In fact, equation \eqref{newstokes} is a generalization of equation \eqref{oldstokes} in the following sense.

\begin{lem}
If $\nu \in \cM(\partial\bK)_+$ is such that $\mu \in \cM(\bK)_+$ satisfies \eqref{newstokes}, then $\mu$ also satisfies \eqref{oldstokes}.
\end{lem}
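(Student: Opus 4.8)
The plan is to unwind both distributional identities into their weak (integral) forms tested against $C^1$ vector fields, and then simply specialize the test field in \eqref{newstokes} to a field of the factored form $\vu = g\vv$, exactly reversing the manipulation that was used in Section \ref{sec:stokes} to derive \eqref{oldstokes} in the first place.

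First I would record the meaning of the two equations. The identity \eqref{newstokes}, namely $\grad\mu = (\grad g)\nu$, is by definition the statement that for every $\vu \in C^1(\bK)^n$,
$$\int_\bK \div\vu\, \md\mu = -\int_{\partial\bK} \vu\cdot\grad g\, \md\nu,$$
which is precisely the integral identity displayed just above the lemma. Likewise, \eqref{oldstokes}, namely $(\grad g)\mu - \grad(g\mu) = 0$, means that for every $\vv\in C^1(\bK)^n$,
$$\int_\bK \big(\grad g\cdot\vv + g\,\div\vv\big)\, \md\mu = \int_\bK \div(g\vv)\, \md\mu = 0,$$
where the middle equality is the product rule $\div(g\vv) = \grad g\cdot\vv + g\,\div\vv$, valid since $g\in\R[\vx]$ and $\vv\in C^1(\bK)^n$.

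Next, I would fix an arbitrary $\vv \in C^1(\bK)^n$ and apply the weak form of \eqref{newstokes} to the test field $\vu := g\vv$, which belongs to $C^1(\bK)^n$ because $g$ is a polynomial. This gives
$$\int_\bK \div(g\vv)\, \md\mu = -\int_{\partial\bK} (g\vv)\cdot\grad g\, \md\nu = -\int_{\partial\bK} g\,(\vv\cdot\grad g)\, \md\nu.$$
Then I would invoke that $g$ vanishes identically on $\partial\bK$: since $\bK = \{g\geq 0\}$ while $\partial\bK = \bK\setminus\bOm \subseteq \{g\leq 0\}$, one has $\partial\bK\subseteq\{g=0\}$, and $\nu\in\cM(\partial\bK)_+$ is supported there. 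Hence the right-hand side vanishes, so $\int_\bK \div(g\vv)\, \md\mu = 0$ for every $\vv\in C^1(\bK)^n$, which is exactly \eqref{oldstokes}.

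I do not expect a real obstacle in this argument; the only points requiring a little care are bookkeeping ones: that testing the distributional identity \eqref{newstokes} against $g\vv$ is legitimate (both sides are vector measures of order at most one, compactly supported on $\bK$, hence extend to continuous linear functionals on $C^1(\bK)^n$, consistently with the integral formulation already used in the text), and the elementary observation that $g|_{\partial\bK}=0$, which is what makes the boundary term disappear.
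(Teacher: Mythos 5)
Your argument is correct and matches the paper's proof: both specialize the weak form of \eqref{newstokes} to the test field $\vu = g\vv$ and use that $g$ vanishes on $\partial\bK$ to kill the boundary term, recovering \eqref{oldstokes}. The only difference is that you spell out explicitly why $g|_{\partial\bK}=0$ and why the substitution is legitimate, which the paper leaves implicit.
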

\begin{proof}
Equation \eqref{newstokes} means that $\int_{\bK} \div \vu(\vx) \; d\mu(\vx) + \int_{\partial \bK} \vu(\vx) \cdot \grad g(\vx) \; d\nu(\vx)  = 0$ for all $\vu \in C^1(\bK)^n$. In particular if $\vu = g\vv$ for some $\vv \in C^1(\bK)^n$ then \eqref{newstokes} reads
\[\int_\bK (\vv(\vx) \cdot \grad g(\vx) + g(x) \div \vv(\vx))\; d\mu(\vx) = 0\,,\]
which is precisely \eqref{oldstokes}.
\end{proof}

Hence we can incorporate linear constraints  \eqref{newstokes} on $\mu$ and $\nu$, to rewrite problem \eqref{pl:primal} as
\begin{align}
\max_{\mu,\nu} \; & \mu(\bK) \label{lp:jean_primal}\\
s.t. \; & \mu \in \cM(\bK)_+ \nonumber \\
& \nu \in \cM(\partial\bK)_+ \nonumber \\
& \lambda_\bB - \mu \in \cM(\bB)_+ \nonumber \\
& (\grad g) \nu - \grad \mu = 0 \nonumber
\end{align}
without changing its optimal value $\lambda(\bK)$ attained at $\mu^\star = \lambda_\bK$
and $\nu^\star = \sigma/|\grad g|$. Notice that LP \eqref{lp:jean_primal} involves {two} measures $\mu$ and $\nu$ whereas
LP \eqref{pl:primstokes} involves only one measure $\mu$.

Next, by convex duality as in e.g. the proof of Theorem 2 in \cite{hk14},
the dual of \eqref{lp:jean_primal} reads
\begin{align}
\inf_{\vu,w} \; & \int_\bB w \,d\lambda \label{lp:jean_dual} \\
s.t. \; & \vu \in C^1(\bK)^n \nonumber \\
& w \in C^0(\bB)_+ \nonumber \\
& w\vert_\bK - \div \vu - 1 \in C^0(\bK)_+ \nonumber \\
& -(\vu \cdot \grad g)|_{\partial\bK} \in C^0(\partial \bK)_+. \nonumber
\end{align}
Our main result states that the optimal value of the dual \eqref{lp:jean_dual} is attained at some continuous function
$(w,\vu)\in C^0(\bB)_+\times C^1(\bK)^n$. Therefore, in contrast with problem \eqref{pl:dual}, there is no Gibbs phenomenon at an optimal solution of the (finite-dimensional) semidefinite {strengthening} associated 
with \eqref{lp:jean_dual}.

Let $\bOm_i$, $i=1,\ldots,N$ denote the connected components of $\bOm$, and let $$m_{\bOm_i}(g) := \frac{1}{\lambda(\bOm_i)}\int_{\bOm_i} g \; d\lambda.$$
 
\begin{thm}\label{main}
In dual LP \eqref{lp:jean_dual} the infimum is a minimum, attained at
$$w^\star(\vx) := g(\vx)\sum_{i=1}^N\frac{\ind_{\bOm_i}(\vx)}{m_{\bOm_i}(g)},\quad \vx \in \bB\,,$$
and
$$\vu^\star(\vx) := \grad u(\vx)\,,$$
where $u$ solves the Poisson PDE
$$\left\{\begin{array}{rcll}
-\Delta u(\vx) &=& 1 - w^\star(\vx),& \quad \vx \in \bOm \\
\partial_\vn u(\vx) &=& 0,& \quad \vx \in \partial\bOm.
\end{array}\right.$$
\end{thm}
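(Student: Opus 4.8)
The plan is to verify directly that the proposed pair $(w^\star,\vu^\star)$ is feasible for \eqref{lp:jean_dual}, and then to check that its objective value equals $\lambda(\bK)$, which is the known optimal value of the primal \eqref{lp:jean_primal} (attained at $\mu^\star=\lambda_\bK$, $\nu^\star=\sigma/|\grad g|$). By weak duality for the LP pair, feasibility plus matching value forces optimality, so the infimum is attained. Thus the real content is (a) that the Poisson--Neumann problem defining $u$ is solvable with enough regularity that $\vu^\star=\grad u\in C^1(\bK)^n$, and (b) that the three inequality/equality constraints of \eqref{lp:jean_dual} hold for the explicit $w^\star$ and this $\vu^\star$.

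\textbf{Step 1: solvability and regularity of the Poisson--Neumann problem.} The Neumann problem $-\Delta u = f$ on $\bOm$, $\partial_\vn u = 0$ on $\partial\bOm$ is solvable (uniquely up to additive constants on each connected component) if and only if the compatibility condition $\int_{\bOm_i} f\,d\lambda = 0$ holds on each component $\bOm_i$. Here $f = 1 - w^\star$, and by the very definition of $m_{\bOm_i}(g)$ one computes $\int_{\bOm_i}(1 - w^\star)\,d\lambda = \lambda(\bOm_i) - \frac{1}{m_{\bOm_i}(g)}\int_{\bOm_i} g\,d\lambda = 0$, so the condition is satisfied. For the regularity $\grad u \in C^1(\bK)^n$, i.e. $u \in C^2(\overline{\bOm})$, one invokes elliptic Schauder theory: since $\partial\bOm$ is $C^1$ and (implicitly, via $g$ polynomial with nonvanishing gradient on the boundary) in fact smooth, and since $f = 1 - w^\star$ restricted to $\bOm$ equals $1 - g\sum_i \ind_{\bOm_i}/m_{\bOm_i}(g)$, which is a polynomial on $\bOm$ hence Hölder continuous up to the boundary, the recent Neumann-boundary Schauder estimates (the "recent extension of standard Schauder estimates from Dirichlet problems to Neumann formulations" alluded to in the abstract) give $u \in C^{2,\alpha}(\overline{\bOm})$. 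This regularity step is the main obstacle: unlike the Dirichlet case it is not completely classical, and one must be careful that $w^\star$, although only piecewise-defined on $\bB$, is genuinely a polynomial times $\ind_{\bOm_i}$ on each component so that its restriction to $\bOm$ is smooth there.

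\textbf{Step 2: feasibility of the constraints.} First, $\vu^\star = \grad u \in C^1(\bK)^n$ by Step 1. Next, the boundary constraint $-(\vu^\star\cdot\grad g)|_{\partial\bOm}\in C^0(\partial\bOm)_+$: on $\partial\bOm$ we have $\grad g = -|\grad g|\,\vn_\bOm$, so $\vu^\star\cdot\grad g = -|\grad g|\,(\grad u\cdot\vn_\bOm) = -|\grad g|\,\partial_\vn u = 0$ by the Neumann boundary condition; hence $-(\vu^\star\cdot\grad g)|_{\partial\bOm} = 0 \geq 0$. For the constraint $w^\star|_\bK - \div\vu^\star - 1 \in C^0(\bK)_+$: on $\bOm$, $\div\vu^\star = \Delta u = -(1 - w^\star) = w^\star - 1$, so $w^\star - \div\vu^\star - 1 = w^\star - (w^\star-1) - 1 = 0$, and by continuity this extends to all of $\bK = \overline{\bOm}$; so this quantity is identically $0$, in particular $\geq 0$. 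Finally, $w^\star \in C^0(\bB)_+$: on each $\bOm_i$, $w^\star = g/m_{\bOm_i}(g)$; since $g > 0$ on $\bOm_i$ and $m_{\bOm_i}(g) > 0$ (an average of the positive function $g$ over $\bOm_i$), $w^\star \geq 0$ there; $w^\star = 0$ on $\bB\setminus\bOm$; and $w^\star$ is continuous because $g$ vanishes on $\partial\bOm$, matching the zero values from outside — here one uses that $g=0$ on $\partial\bK$ by construction, so the two pieces agree continuously on the overlap.

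\textbf{Step 3: matching the optimal value.} It remains to evaluate the objective $\int_\bB w^\star\,d\lambda$. Since $w^\star$ vanishes outside $\bOm$ and $\bB\setminus\bK$ has the same measure contribution as $\bB\setminus\bOm$, we get $\int_\bB w^\star\,d\lambda = \sum_{i=1}^N \int_{\bOm_i} \frac{g}{m_{\bOm_i}(g)}\,d\lambda = \sum_{i=1}^N \frac{1}{m_{\bOm_i}(g)}\,\lambda(\bOm_i)\,m_{\bOm_i}(g) = \sum_{i=1}^N \lambda(\bOm_i) = \lambda(\bOm) = \lambda(\bK)$. This equals the primal optimal value, so by weak LP duality $(w^\star,\vu^\star)$ is optimal for \eqref{lp:jean_dual} and the infimum is attained, completing the proof. (One should also remark that the constraints being satisfied \emph{with equality} — two of the three cone constraints give the zero function — is consistent with, and in fact reflects, complementary slackness against $\mu^\star$ and $\nu^\star$.)
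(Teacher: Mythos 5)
Your proposal is correct and follows essentially the same route as the paper: you verify the zero-mean compatibility condition on each connected component, invoke the Neumann-boundary Schauder estimates to get $u\in C^{2,\alpha}(\overline{\bOm})$, check feasibility of $(w^\star,\grad u)$ in \eqref{lp:jean_dual}, and conclude by matching the value $\lambda(\bK)$ against the primal optimum via weak duality. The paper merely packages the same content into intermediate lemmas (and additionally proves a converse, via complementary slackness, that your direct argument does not need).
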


\begin{rmk} The Moment-SOS hierarchy associated to LPs \eqref{lp:jean_primal} and \eqref{lp:jean_dual} yields upper bounds for the volume. Theorem \ref{main} is designed for these LPs but it has a straightforward counterpart for lower bound volume computation, obtained by replacing $\bK$ with $\bB \setminus \bOm$ in the previous developments, i.e. computing upper bounds of $\lambda(\bB\setminus\bOm)$. However, two additional technicalities should then be considered:
\begin{itemize}
\item This work only deals with semi-algebraic sets defined by a single polynomial; actually, it immediately generalizes to finite intersections of such semi-algebraic sets, as long as their boundaries do not intersect (i.e. here $\bK$ should be included in the \textit{interior} of $\bB$): the constraints on boundaries should just be splitted between the boundaries of the intersected sets.
\item This work heavily relies on the fact that the boundary of the considered set should be smooth; for this reason, computing lower bounds of the volume implies that one chooses a smooth bounding box $\bB$ (typically a euclidean ball, ellipsoid or $\ell^p$ ball), which rules out simple sets like the hypercube $[-1,1]^n$.
\end{itemize}

Upon taking into account these technicalities, Theorem \ref{main} still holds, allowing to deterministically compute \textit{upper and lower} bounds for the volume, with arbitrary precision. Of course in practice, one is limited by the performance of state-of-art SDP solvers.
\end{rmk}

\section{Proof of main result} \label{sec:proof}

Theorem \ref{main} is proved in several steps as follows:
\begin{itemize}
\item we show that the optimal dual solution satisfies a Poisson PDE;
\item we study the Poisson PDE on a union of connected domains;
\item we construct an explicit optimum for problem \eqref{lp:jean_dual}.
\end{itemize}

\subsection{Equivalence to a Poisson PDE} \label{sec:pdeq}

\begin{lem}\label{equiv}
Problem \eqref{lp:jean_dual} has an optimal solution iff there exist $\vu \in C^1({\bK})^n$ and $h \in C^0({\bK})_+$ solving
\begin{subequations}\label{eq:optimum}
\begin{align}
& h = 0 && \text{ on } \partial \bOm, \label{eq:recol} \\
& - \div\vu = 1 - h && \text{ in } \bOm, \label{eq:ediv} \\
& \vu \cdot \vn_\bOm = 0 && \text{ on } \partial\bOm. \label{eq:boundary}
\end{align}
\end{subequations}
\end{lem}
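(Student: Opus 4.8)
The plan is to prove Lemma~\ref{equiv} by a direct translation between the optimization problem \eqref{lp:jean_dual} and the PDE system \eqref{eq:optimum}, exploiting the fact that we already know the optimal value of \eqref{lp:jean_dual} equals $\lambda(\bK)$ (by weak duality with \eqref{lp:jean_primal} and the knowledge that $\mu^\star=\lambda_\bK$, $\nu^\star=\sigma/|\grad g|$ is primal optimal). First I would observe that any feasible $(\vu,w)$ of \eqref{lp:jean_dual} satisfies, using the constraints $w\ge 0$ on $\bB$ and $w\ge 1+\div\vu$ on $\bK$, the chain of inequalities
\begin{align*}
\int_\bB w\,d\lambda \;&\ge\; \int_\bK w\,d\lambda \;\ge\; \int_\bK (1+\div\vu)\,d\lambda \\
&=\; \lambda(\bK) + \int_{\partial\bOm}\vu\cdot\vn_\bOm\,d\sigma \;\ge\; \lambda(\bK),
\end{align*}
where the equality is the Gauss formula \eqref{eq:gaussform} and the last inequality uses the constraint $-(\vu\cdot\grad g)\ge 0$ on $\partial\bK$ together with $\grad g = -|\grad g|\,\vn_\bOm$, which forces $\vu\cdot\vn_\bOm\ge 0$ on $\partial\bOm$. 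Hence a feasible $(\vu,w)$ is optimal if and only if all three inequalities are equalities.

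The heart of the argument is then to see what saturating these inequalities means. The first, $\int_\bB w = \int_\bK w$, combined with $w\ge 0$ on $\bB$, forces $w=0$ $\lambda$-a.e.\ on $\bB\setminus\bK$, hence $w=0$ on $\bB\setminus\bOm$ by continuity; setting $h:=w|_\bK$ we get $h=0$ on $\partial\bOm$, which is \eqref{eq:recol}. The second, $\int_\bK w = \int_\bK(1+\div\vu)$, combined with the pointwise inequality $w\ge 1+\div\vu$ on $\bK$ and continuity of both sides, forces $w = 1+\div\vu$ everywhere on $\bK$, i.e.\ $-\div\vu = 1-h$ in $\bOm$, which is \eqref{eq:ediv}. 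The third inequality being an equality gives $\int_{\partial\bOm}\vu\cdot\vn_\bOm\,d\sigma = 0$; since the integrand is continuous and nonnegative on $\partial\bOm$ (as just noted), it must vanish identically, giving $\vu\cdot\vn_\bOm = 0$ on $\partial\bOm$, which is \eqref{eq:boundary}. Conversely, given $\vu\in C^1(\bK)^n$ and $h\in C^0(\bK)_+$ solving \eqref{eq:optimum}, I would define $w$ on $\bB$ by $w:=h$ on $\bK$ and $w:=0$ on $\bB\setminus\bOm$; conditions \eqref{eq:recol} and continuity of $h$ ensure $w$ is well-defined and continuous on $\bB$ (the two definitions agree on $\partial\bOm$), $w\ge 0$ on $\bB$, and $w|_\bK - \div\vu - 1 = h - \div\vu - 1 = 0\ge 0$ by \eqref{eq:ediv}, while $-(\vu\cdot\grad g)|_{\partial\bK} = |\grad g|\,(\vu\cdot\vn_\bOm) = 0\ge 0$ by \eqref{eq:boundary}; running the inequality chain backwards shows $\int_\bB w\,d\lambda = \lambda(\bK)$, so $(\vu,w)$ is feasible with optimal value, hence an optimal solution.

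The main obstacle I anticipate is not the algebra of the translation but making the measure-theoretic/continuity arguments airtight: in particular, (a) arguing that $w=0$ $\lambda$-a.e.\ on the open set $\bB\setminus\bK$ together with continuity and $\overline{\bOm}=\bK$ really does give $w\equiv 0$ on the \emph{closed} set $\bB\setminus\bOm$ (one needs that $\bB\setminus\bK$ is dense in $\bB\setminus\bOm$, which follows from $\partial\bOm$ having empty interior, itself a consequence of $\bOm$ being a nonempty $C^1$ open set with $\overline{\bOm}=\bK$); (b) checking that $w$ defined piecewise is genuinely $C^0$ across $\partial\bOm$, which is exactly the role of the gluing condition \eqref{eq:recol}; and (c) confirming that weak duality is tight, i.e.\ that the common value of \eqref{lp:jean_primal}–\eqref{lp:jean_dual} is indeed $\lambda(\bK)$ with the primal attained, so that ``optimal'' can be characterized by saturation of the weak-duality inequality chain rather than by an abstract duality theorem. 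Once these points are settled, the equivalence follows by the explicit back-and-forth construction above, and the remaining sections can focus on actually solving \eqref{eq:optimum} via the Poisson PDE.
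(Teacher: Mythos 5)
Your proof is correct and follows essentially the same route as the paper: the converse direction (gluing $h$ on $\bK$ with $0$ on $\bB\setminus\bOm$ into a continuous feasible $w$ and checking optimality via the Gauss formula) is identical, and the forward direction extracts the same three saturation conditions \eqref{eq:recol}--\eqref{eq:boundary} from optimality of $(\vu,w)$. The only difference is methodological rather than substantive: where the paper invokes KKT/complementary slackness against the known primal optimizer $(\lambda_\bK,\sigma/|\grad g|)$, you derive the same conditions by hand from an explicit weak-duality inequality chain, which is more self-contained and makes transparent the shared dependence on the dual optimal value being exactly $\lambda(\bK)$ (your point (c)).
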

\begin{proof}
Let $(\vu,h)$ solve \eqref{eq:optimum}. Using \eqref{eq:recol}, one can define
$$w(\vx) = \left\{\begin{array}{ll}h(\vx) & \text{ if } \vx \in {\bK} \\ 0 & \text{ if } \vx \in \bB\setminus{\bK}. \end{array} \right. $$
Then $(\vu,w)$ is feasible for \eqref{lp:jean_dual} and one has
\begin{align*}
\int_\bB w \; d\lambda \; & = \int_\bOm h \; d\lambda \\
& \stackrel{\eqref{eq:ediv}}{=} \int_\bOm (1 + \div\vu) \; d\lambda \\
& \stackrel{\eqref{eq:gaussform}}{=} \lambda(\bOm) + \int_{\partial \bOm}\vu \cdot \vn_\bOm \; d\sigma \\
& \stackrel{\eqref{eq:boundary}}{=} \lambda(\bOm)
\end{align*}
so that $(\vu,w)$ is optimal.

Conversely, let $(\vu,w)$ be an optimal solution of problem \eqref{lp:jean_dual}. We know that $(\mu^\star,\nu^\star) = \left(\lambda_\bOm,\sigma/{|\grad g|}\right)$ is optimal for problem \eqref{lp:jean_primal}. Then, {the KKT optimality conditions ensure complementary slackness}:
\begin{subequations}
\begin{align}
&\displaystyle\int_\bOm (w|_\bOm - \div \vu - 1) \; d\lambda = 0, \label{eq:satmu} \\
&\displaystyle\int_{\partial \bOm} \vu \cdot \frac{\grad g}{|\grad g|} \; d\sigma = 0. \label{eq:satnu}
\end{align}
\end{subequations}
Since $w|_\bOm - \div \vu - 1$ is nonnegative, \eqref{eq:satmu} yields \eqref{eq:ediv} with $h := w|_\bOm$. Likewise, since $-(\vu \cdot \grad g)|_{\partial\bOm}$ is nonnegative, \eqref{eq:satnu} yields \eqref{eq:boundary} and thus, using \eqref{eq:gaussform}, it holds $\int_\bOm \div \vu \; d\lambda = 0$. Eventually, \eqref{eq:satmu} yields $\int_\bOm w \; d\lambda = \lambda(\bOm) = \int_\bB w \; d\lambda$ by optimality of $w$, so that $\int_{\bB \setminus \bOm} w \; d\lambda = 0$ and, since $w$ is nonnegative, $w|_{\bB \setminus \bOm} = 0$. Continuity of $w$ finally allows to conclude that $w = 0$ on $\partial \bOm$, which is exactly \eqref{eq:recol}.
\end{proof}

From Lemma \ref{equiv}, existence of an optimum for \eqref{lp:jean_dual} is then equivalent to existence of a solution to \eqref{eq:optimum}, which we rephrase as follows, defining $f := 1 - h$ and $\vu = \grad u$ with $u \in C^2({\bK})$,
and where $\Delta u := \div \grad u$ is the Laplacian of $u$, and $\partial_\vn u := \grad u \cdot \vn_\bOm$. 

\begin{lem} \label{lem:optim}
If there exist $u \in C^2({\bK})^n$ and $f \in C^0({\bK})$ solving
\begin{subequations}\label{eq:poisson}
\begin{align}
 -\Delta u = f &&&\text{ in } \bOm, \label{eq:laplacien} \\
 \partial_\vn u = 0 &&& \text{ on } \partial \bOm, \label{eq:neumann} \\
 f \leq 1 &&& \text{ in } \bOm, \label{eq:sourcin} \\
 f = 1 &&& \text{ on } \partial \bOm, \label{eq:sourceq}
\end{align}
\end{subequations}
then problem \eqref{lp:jean_dual} has an optimal solution.
\end{lem}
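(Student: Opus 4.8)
The plan is to recognize that system \eqref{eq:poisson} is nothing but system \eqref{eq:optimum} after the substitution announced just above the statement, so that the conclusion will follow immediately from Lemma \ref{equiv}. Concretely, given $u\in C^2(\bK)$ and $f\in C^0(\bK)$ solving \eqref{eq:poisson}, I would set $\vu:=\grad u$ and $h:=1-f$. Since $u\in C^2(\bK)$ we have $\vu=\grad u\in C^1(\bK)^n$, and $h=1-f$ is continuous on $\bK$. From \eqref{eq:sourcin}, $h\geq 0$ on $\bOm$, and by continuity of $h$ together with the standing assumption $\overline{\bOm}=\bK$ this extends to $h\geq 0$ on all of $\bK$, so $h\in C^0(\bK)_+$.

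It then remains to match the three equations. Equation \eqref{eq:sourceq} ($f=1$ on $\partial\bOm$) is exactly \eqref{eq:recol} ($h=0$ on $\partial\bOm$). Substituting $\vu=\grad u$ and $h=1-f$ turns \eqref{eq:laplacien} into $-\div\vu=1-h$ in $\bOm$, i.e.\ \eqref{eq:ediv}. And \eqref{eq:neumann} ($\partial_\vn u=\grad u\cdot\vn_\bOm=0$ on $\partial\bOm$) is precisely \eqref{eq:boundary} ($\vu\cdot\vn_\bOm=0$ on $\partial\bOm$). Hence $(\vu,h)$ solves \eqref{eq:optimum}, and Lemma \ref{equiv} provides an optimal solution of \eqref{lp:jean_dual}.

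I do not expect any real obstacle in this step: it is a pure change of variables, the only slightly delicate point being the passage of the sign condition $h\geq 0$ from the open set $\bOm$ to its closure $\bK$, which uses continuity of $h$ and the hypothesis $\overline{\bOm}=\bK$ (note that \eqref{eq:sourcin} is used only for this, and plays no role in the chain of equalities). The substantial difficulty is deferred to the subsequent steps, outside the scope of this lemma: one must actually \emph{construct} a pair $(u,f)$ solving \eqref{eq:poisson}, choosing $f=1-w^\star$ so that the Neumann solvability condition $\int_{\bOm_i} f\,d\lambda=0$ holds on each connected component $\bOm_i$ of $\bOm$, and then establishing regularity of $u$ up to $\partial\bOm$ — hence $\vu=\grad u\in C^1(\bK)^n$ — via elliptic Schauder-type estimates for the Neumann problem.
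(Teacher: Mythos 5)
Your proposal is correct and is exactly the paper's (implicit) argument: the lemma is obtained from Lemma \ref{equiv} by the substitution $\vu:=\grad u$, $h:=1-f$, under which \eqref{eq:laplacien}--\eqref{eq:sourceq} become \eqref{eq:optimum}. Your extra care in checking $h\geq 0$ on all of $\bK$ (via \eqref{eq:sourcin} on $\bOm$ and \eqref{eq:sourceq} on $\partial\bOm$) is a detail the paper leaves tacit, but the route is the same.
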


This rephra{s}ing is a Poisson PDE \eqref{eq:laplacien} with Neumann boundary condition \eqref{eq:neumann}, whose source term $f$ is a parameter subject to constraints \eqref{eq:sourcin} and \eqref{eq:sourceq}.

\begin{rmk}[Loss of generality] Looking for $\vu$ under the form $\vu = \grad u$ makes us loose the equivalence. Indeed, while \eqref{lp:jean_dual} and \eqref{eq:optimum} are equivalent, existence of a solution to \eqref{eq:poisson} is only a sufficient condition for existence of an optimum for \eqref{lp:jean_dual}, since \eqref{eq:optimum} might have only solutions $\vu$ that are not gradients.
\end{rmk}

\begin{rmk}[Invariant set for gradient flow] From a dynamical systems point of view, the constraint in \eqref{lp:jean_dual} which states 
that the inner product of $\vu=\grad u$ with $\grad g$ is non-positive on $\partial\bOm$, means that we are looking for a velocity field or control $\vu$ in the form of the gradient of a potential $u$ such that ${\bK}$ is an invariant set for the solutions $t \in \R \mapsto \vx(t) \in \R^n$ of the Cauchy problem
$$ \dot{\vx}(t) = -\grad u(\vx(t)),\quad \vx(0) \in \bB$$
after what we just have to define $h := 1 + \Delta u$ on $\bOm$.
\end{rmk}

\subsection{Regular solutions to the Poisson PDE} \label{sec:pdes}

It remains to prove existence of solutions to problem \eqref{eq:poisson}. First, notice that PDE \eqref{eq:laplacien} together with its boundary condition \eqref{eq:neumann} enforces an important constraint on the source term $f$, namely its mean must vanish:
\begin{equation}\label{eq:moy}
\int_\bOm f \; d\lambda = 0.
\end{equation}
Indeed, if $(f,u)$ solves \eqref{eq:poisson}, then 
\begin{align*}
\int_\bOm f \; d\lambda \; & \stackrel{\eqref{eq:laplacien}}{=} - \int_\bOm \Delta u \; d\lambda \\
& \stackrel{\eqref{eq:gaussform}}{=} - \int_{\partial\bOm} \grad u \cdot \vn_\bOm \; d\sigma
% \\
%& = - \int_{\partial\bOm} \partial_\vn u \; d\sigma
 \,\stackrel{\eqref{eq:neumann}}{=} 0\,.
\end{align*}
Moreover, the following holds.

\begin{lem}[Existence { and regularity} on a connected domain] \label{thm:exist}
Suppose that $\bOm$ is connected. Let the source term $f
%\in L^2(\bOm) \cap  C^\infty({\bK})
$ {be Lipschitz continuous on $\bK$ and}
 have zero mean on $\bOm$.  Then there exists $u \in C^
{2}
%\infty
({\bK})$ satisfying \eqref{eq:laplacien} and \eqref{eq:neumann}.
\end{lem}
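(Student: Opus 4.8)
The plan is to prove existence and $C^2$ regularity for the Neumann–Poisson problem on a connected $C^1$ domain, assuming $f$ is Lipschitz on $\bK$ with $\int_\bOm f\,d\lambda = 0$. First I would establish existence of a weak solution $u\in H^1(\bOm)$: this is the standard Lax–Milgram argument applied to the bilinear form $a(u,v) = \int_\bOm \grad u\cdot\grad v\,d\lambda$ on the quotient space $H^1(\bOm)/\R$ (equivalently, on the subspace of zero-mean functions), where coercivity comes from the Poincaré–Wirtinger inequality on the connected domain $\bOm$. The compatibility condition $\int_\bOm f\,d\lambda = 0$ (already derived in \eqref{eq:moy}) is exactly what makes the linear functional $v\mapsto\int_\bOm f v\,d\lambda$ well-defined on the quotient, so the weak solution exists and is unique up to an additive constant; the Neumann condition \eqref{eq:neumann} is encoded weakly in the variational formulation.

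Next I would upgrade regularity. Interior regularity is classical: since $f\in C^{0,1}(\bOm)\subset C^{0,\alpha}_{\mathrm{loc}}$ for any $\alpha\in(0,1)$, interior Schauder estimates give $u\in C^{2,\alpha}_{\mathrm{loc}}(\bOm)$. The genuinely delicate part is regularity up to the boundary, because here the boundary is only assumed $C^1$, not $C^{2,\alpha}$, and the boundary condition is of Neumann (not Dirichlet) type. For this I would invoke the recent extension of Schauder estimates to Neumann problems alluded to in the introduction — roughly, that for a $C^1$ (or $C^{1,\alpha}$) domain the solution of $-\Delta u = f$, $\partial_\vn u = 0$ with $f\in C^{0,\alpha}$ lies in $C^{1,\alpha}(\overline\bOm)$, and combined with the interior $C^{2,\alpha}$ estimate this gives $u\in C^2(\bK)$ (at least after reconciling the global $C^1$ control of $\grad u$ with the interior second-derivative bounds; one should be careful about exactly which global modulus of continuity is obtained). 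I expect this boundary-regularity step to be the main obstacle, and the place where the cited PDE literature does the heavy lifting.

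A subtlety worth flagging: $\bOm$ is only assumed $C^1$, but elliptic regularity up to a $C^1$ boundary for second derivatives is borderline — one typically needs $C^{1,\alpha}$ or $C^2$ boundary for full $C^2(\overline\bOm)$ conclusions. I would either (i) note that the cited Neumann–Schauder result yields exactly $C^1$ regularity of $\vu = \grad u$ up to the boundary, which is all that Lemma \ref{equiv} and Lemma \ref{lem:optim} actually require (they ask for $\vu\in C^1(\bK)^n$, equivalently $u\in C^2$ only in the sense needed to make $\div\vu$ continuous and the boundary trace of $\vu\cdot\vn$ well-defined), or (ii) strengthen the hypothesis implicitly to a smoother boundary as is done elsewhere in the paper. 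In writing the proof I would present the Lax–Milgram/Poincaré existence argument in full, then cite the interior Schauder theory and the Neumann-boundary Schauder extension for the regularity, and finally check that the regularity obtained is precisely what feeds into Lemma \ref{lem:optim} — namely $u\in C^2(\bK)$ (or $\vu=\grad u\in C^1(\bK)^n$) so that $h := 1+\Delta u$ is continuous on $\bK$ and the chain \eqref{eq:recol}–\eqref{eq:boundary} closes.
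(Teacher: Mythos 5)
Your proposal is correct and ultimately pivots on the same external input as the paper, namely the Neumann--Schauder result of \cite{schauder}; but you arrive there by a longer route. The paper's proof is a two-line argument: since $\bK$ is bounded, Lipschitz continuity of $f$ gives $f\in C^{0,\alpha}(\bK)$ for any $\alpha\in(0,1)$ (via $\|\vx-\vy\|\leq (2R)^{1-\alpha}\|\vx-\vy\|^{\alpha}$), and \cite{schauder} then delivers \emph{both} existence and global $C^{2,\alpha}(\bK)$ regularity of $u$ in one stroke, so the separate Lax--Milgram/Poincar\'e--Wirtinger construction of a weak solution and the interior-vs-boundary splitting of the Schauder estimates that you propose are not needed (they are, in effect, what the cited reference already does internally). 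Your hesitation about whether the cited result yields only $C^{1,\alpha}$ up to the boundary is resolved in the paper's favour: the statement invoked gives $u\in C^{2,\alpha}(\bK)$, which is more than the $\vu=\grad u\in C^1(\bK)^n$ that Lemma \ref{lem:optim} requires. The one genuine subtlety you flag --- that a merely $C^1$ boundary is borderline for second-derivative estimates up to the boundary --- is real and is exactly your option (ii): the paper disposes of it in the remark immediately following the lemma, observing that since $\bOm$ is semi-algebraic, a $C^1$ boundary is automatically smooth, so the regularity hypotheses of \cite{schauder} are met without loss of generality. Had the set not been semi-algebraic, your concern would be a genuine gap in the two-line proof.
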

\begin{proof}
{
This is a direct application of \cite{schauder}: for $\alpha \in (0,1)$, since $\bK$ is bounded (let $R>0$ be such that $\bK \subset \{\vx \in \R^n : \|\vx\| \leq R \}$) and $f$ is Lipschitz (let $L$ be its Lipschitz constant on $\bK$), one has for $\vx,\vy \in \bK$ that 
$$|f(\vx) - f(\vy)| \leq L \, \|\vx-\vy\| \leq L \, \|\vx-\vy\|^{1-\alpha} \|\vx-\vy\|^\alpha \leq \underbrace{L \, (2R)^{1-\alpha}}_{< \infty} \|\vx-\vy\|^\alpha,$$
so that
$$f \in C^{0,\alpha}(\bK) := \left\{\varphi \in C^0(\bK) : \sup\limits_{\vx,\vy \in \bK} \frac{|\varphi(\vx) - \varphi(\vy)|}{\|\vx-\vy\|^\alpha} < \infty \right\}, $$
and \cite{schauder} yields a solution
$$ u \in C^{2,\alpha}(\bK) := \left\{\varphi \in C^2(\bK) : \sup\limits_{\vx,\vy \in \bK} \frac{\|\mH(\varphi)(\vx) - \mH(\varphi)(\vy)\|}{\|\vx-\vy\|^\alpha} < \infty \right\} $$
to the Poisson PDE \eqref{eq:laplacien} with Neumann boundary condition \eqref{eq:neumann}, where $\mH(\varphi) = \left(\dfrac{\partial^2\varphi}{\partial x_i\partial x_j}\right)_{i,j}$ is the Hessian matrix of $\varphi$.
}
\end{proof}

\begin{rmk}
Assuming that $\partial \bOm$ is $C^\infty$ instead of $C^1$ is actually without loss of generality since $\bOm$ is a semi-algebraic set: as soon as $\partial \bOm$ is locally the graph of a $C^1$ function, it is smooth.
\end{rmk}

In Lemma \ref{thm:exist}, we assumed that $\bOm$ is connected, so that we could apply the 
{results of \cite{schauder}. T}o tackle non-connected sets{, we recall that} $\bOm$ is a semi-algebraic set, {hence} it has a finite number of connected components $\bOm_1,\dots,\bOm_N$. {Moreover, the regularity of $\partial\bOm$ ensures that the $\bOm_i$ have disjoint closures, so that we can trivially compute a solution $u_i$ on each $\bOm_i$ and glue the $u_i$ together into a solution $u := \sum_{i=1}^N \ind_{\overline\bOm_i}u_i$ on the whole $\bOm$.

\begin{rmk}
Tackling the non-connected case requires that $f$ has zero mean on each connected component of $\bOm$:
$$ \forall i \in \{1,\ldots,N\}, \quad \int_{\bOm_i} f \, d\lambda = 0. $$
\end{rmk}}

\begin{rmk}
Lemma \ref{thm:exist} automatically enforces $-\Delta u = 1$ on $\partial \bOm$, which is crucial for the continuity of the optimization variable $w$.
\end{rmk}

\subsection{Explicit optimum for volume computation with Stokes constraints}

Our optimization problem does not feature only the Poisson PDE with Neumann condition: it also includes constraints \eqref{eq:sourcin} and \eqref{eq:sourceq} on the source term. Consequently, a {Lipschitz continuous} function $f$ on $\bK$ 
with zero integral over any connected component of $\bOm$ and satisfying \eqref{eq:sourcin} and \eqref{eq:sourceq} remains to be constructed. We keep the notations of {Section \ref{sec:pdes}} and suggest as candidate
\begin{equation}\label{eq:source}
\vx\mapsto f(\vx) := 1 - g(\vx)\sum_{i=1}^N\frac{\ind_{\bOm_i}(\vx)}{m_{\bOm_i}(g)}.
\end{equation}
By definition, $g = 0$ on $\partial \bOm$, so that \eqref{eq:sourceq} automatically holds. Moreover, both $g$ and $\ind_{\bOm_i}$ are nonnegative on $\bK$, so that \eqref{eq:sourcin} also holds.

In terms of regularity, $f$ {is continuous and piecewise polynomial, so it is Lipschitz continuous on $\bK$}.

Eventually, let $i \in \{1,\ldots,N\}$ so that $\bOm_i$ is a connected component of $\bOm$. Then, by definition, $\partial \bOm_i \subset \partial \bOm$, and one has
\begin{align*}
\int_{\bOm_i} f \; d\lambda \ & = \int_{\bOm_i} \left(1 - g(\vx)\sum_{i=1}^N\frac{\ind_{\bOm_i}(\vx)}{m_{\bOm_i}(g)}\right) \; \md\vx \\
& = \lambda(\bOm_i) - \frac{1}{m_{\bOm_i}(g)} \int_{\bOm_i} g(\vx) \; \md\vx \,=\,0,
\end{align*}
by definition {of} $m_{\bOm_i}(g)
% = \frac{1}{\lambda(\bOm_i)}\int_{\bOm_i} g(\vx) \; \md\vx
$. {This, together with Lemmata \ref{lem:optim} and \ref{thm:exist}, concludes the proof of Theorem \ref{main}.} 

Indeed, one can check that { for the resulting $w^\star(\vx) = g(\vx)\sum_{i=1}^N\frac{\ind_{\bOm_i}(\vx)}{m_{\bOm_i}(g)}$,}
\begin{align*}
\int_\bB w^\star \; d\lambda \ & = \sum_{i=1}^N \frac{1}{m_{\bOm_i}(g)} \int_{\bOm_i} g \; d\lambda \\
& = \sum_{i=1}^N \lambda(\bOm_i) \, =\, \lambda(\bOm) \, = \,\lambda(\bK).
\end{align*}

\section{Examples}

To illustrate how efficient can be the introduction of Stokes constraints for volume computation, we consider the simple setting where $\bK$ is a Euclidean ball included in $\bB$ the unit Euclidean ball{, as well as some basic variations around this case, where $\bK$ is a non-euclidean ball or a union of balls, or $\bB$ a non-euclidean ball}. Indeed drastic improvements on the convergence are observed. All numerical examples were processed on a standard laptop computer under the Matlab environment with the SOS parser of YALMIP \cite{l04}, the moment parser GloptiPoly \cite{hll09} and the semidefinite programming solver of MOSEK \cite{d12}. {For an interested reader, the codes used to obtain the results presented in this section are available online: \url{https://homepages.laas.fr/henrion/software/stokesvolume/}}

\subsection{Practical implementation} \label{sec:implem}

Following the Moment-SOS hierarchy methodology for volume computation
as described in \cite{hls09}, in the (finite-dimensional) degree $d$ semidefinite {strengthening} of  dual problem \eqref{lp:jean_dual} {with unit euclidean ball as the bounding box $\bB$}:
\begin{itemize}
\item $w\in\R[\vx]_{d}$ and $\vu\in\R[\vx]^n_{d}$ are polynomials of degree at most $d$;
\item the positivity constraint $w \in C^0(\bB)_+$ is replaced with a Putinar certificate 
of positivity on $\bB$, that is:
\[w(\vx)\,=\,\sigma_0(\vx)+\sigma_1(\vx)(1-|x|^2)\,,\quad\forall \vx\in\R^n\,,\]
where $\sigma_0$ (resp. $\sigma_1$) is an SOS polynomial of degree at most $d$ (resp. $d-2$);
\item the positivity constraint $w\vert_\bK - \div \vu - 1 \in C^0(\bK)_+$ is replaced with
a Putinar certificate  of positivity on $\bK$, that is:
\[w(\vx)-\div \vu(\vx) -1\,=\,\psi_0(\vx)+\psi_1(\vx)\,g(\vx)\,,\quad\forall \vx\in\R^n\,,\]
where $\psi_0$ (resp. $\psi_1$) is an SOS polynomial of degree at most $d$ (resp. $d-{\rm deg}(g)$);
\item the positivity constraint $(\vu \cdot \grad g)|_{\partial\bK} \in C^0(\partial \bK)_+$
is replaced with a Putinar certificate  of positivity on $\partial\bK$, that is:
\[ - \vu(\vx) \cdot \grad g(\vx)\,=\,\eta_0(\vx)+\eta_1(\vx)\,g(\vx)\,,\quad\forall \vx\in\R^n\,,\]
where $\eta_0$ is an SOS polynomial of degree at most $d$ 
and $\eta_1$ is a polynomial of degree at most $d-{\rm deg}(g)$;
\item the linear criterion $\int_{\bB} w \, d\lambda$ translates into {a} linear criterion on the vector of coefficients of $w$,
as $\int_{\bB}\vx^\alpha\,d\lambda$ is available in closed-form.
\end{itemize}

The above identities define linear constraints on the coefficients of all the u{n}known polynomials.
Next, stating that some of these polynomials must be SOS 
translate{s} into semidefinite constraints on their respective unknown Gram matrices.
The resulting optimization problem is a semidefinite program{, called the SOS strengthening of problem \eqref{lp:jean_dual}, and is in lagrangian duality with the so-called moment relaxation of problem \eqref{lp:jean_primal}. Using the strong duality property, we interchangeably use the SOS strengthenings and moment relaxations, as they are equivalent}; for more details the interested reader is referred to e.g. \cite{hls09}.

\subsection{Bivariate disk}\label{ex:disk}

Let us first illustrate Theorem \ref{main} for computing the area of the disk $\bK:=\{\vx \in \R^2 : g(\vx) = 1/4 - (x_1-1/2)^2 - x^2_2 \geq 0\}$ included in the unit disk $\bB:=\{\vx \in \R^2 : 1 - x^2_1 - x^2_2 \geq 0\}$.

\begin{figure}[h]\centering
\includegraphics[width=0.495\textwidth]{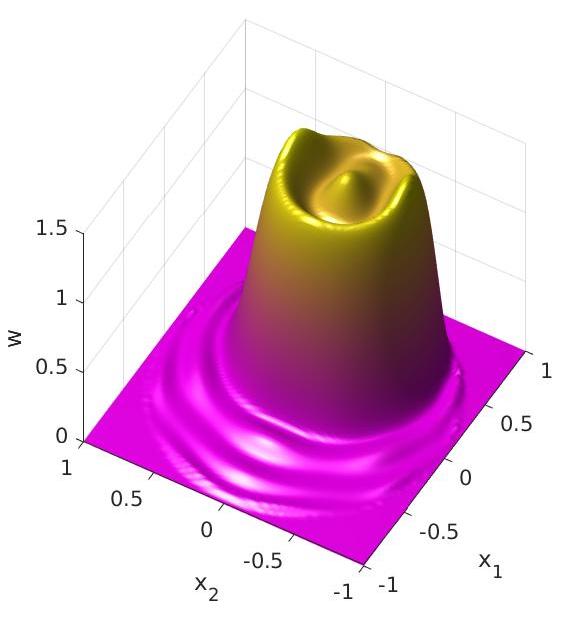}
\includegraphics[width=0.495\textwidth]{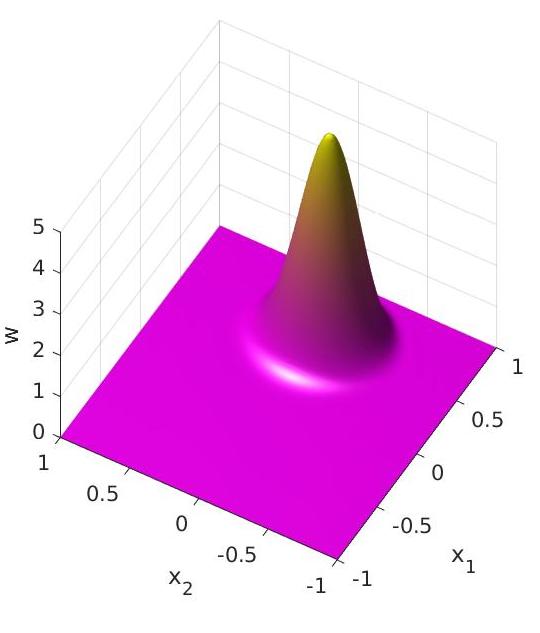}
\caption{Degree 16 polynomial approximations of the disk's area obtained without Stokes constraints (left) and with Stokes constraints (right).\label{fig:disk}}
\end{figure}

The degree $d=16$ polynomial approximation $w$ obtained by solving the SOS {strengthening} of linear problem \eqref{pl:dual} is represented at the left of Figure \ref{fig:disk}. We can see bumps and ripples typical of 
a Gibbs phenomenon, since the polynomial should approximate from above
the discontinuous indicator function $\ind_\bK$  as closely as possible.
A rather loose upper bound of 1.1626 is obtained on the volume $\lambda(\bK)=\frac{\pi}{4}\approx 0.7854$.

In comparison, the degree $d=16$ polynomial approximation $w$ obtained by solving the SOS {strengthening} of linear problem \eqref{lp:jean_dual} is represented at the right of Figure \ref{fig:disk}. As expected from the proof of Theorem \ref{main}, the poynomial should approximate from above the continuous function $g\ind_\bK\:\lambda(\bK)/(\int g\lambda_\bK)$. The resulting polynomial approximation is smoother and yields
a much improved upper bound of 0.7870.

\subsection{Higher dimensions}

In Table \ref{tab:3ball} we report on the dramatic acceleration brought by Stokes constraints in the case of the Euclidean ball $\bK:=\{\vx \in \R^3 : g(\vx) = (3/4)^2 - |\vx|^2 \geq 0\}$ of dimension $n=3$ included in the unit ball $\bB$. We specify the relative errors on the bounds obtained by solving moment relaxations with and without Stokes constraints, together with the computational times (in seconds), for a relaxation degree $d$ ranging from 4 to 20. We observe that tight bounds are obtained already at low degrees with Stokes constraints, sharply contrasting with the loose bounds obtained without Stokes constraints. However, we see also that the inclusion of Stokes constraints has a computational price.

\begin{table}[h]\centering
\begin{tabular}{c|c|c|c}
$n$ & $d$ & without Stokes & with Stokes \\ \hline
3 & 4 & 88\% (0.03s) & 18\% (0.04s) \\
3 & 8 & 57\% (0.16s) & 1.0\% (0.44s) \\
3 & 12 & 47\% (1.97s) & 0.0\% (4.63s) \\
3 & 16 & 43\% (23.9s) & 0.0\% (30.1s) \\
3 & 20 & 41\% (142s) & 0.0\% (206s) \\
\end{tabular}
\caption{Relative errors ($\%$) and computational times (in brackets in seconds) for solving moment relaxations of increasing degrees $d$ approximating the volume of ball of dimension $n=3$.\label{tab:3ball}}
\end{table}

In Table \ref{tab:ball} we report the relative errors on the bounds obtained with and without Stokes constraints, together with the computational times (in seconds), for a relaxation degree equal to $d=10$ (left) resp. $d=4$ (right) and for dimension $n$ ranging from 1 to 5 (left) resp. from 6 to 10 (right). When $d=10$ and $n=5$ the semidefinite relaxation features 6006 pseudo-moments without Stokes constraints, and 12194 pseudo-moments with Stokes constraints.  We see that introducing Stokes constraints incurs a computational cost, to be compromised with the expected quality of the bounds. 

\begin{table}[h]\centering
\begin{tabular}{c|c|c|c}
$n$ & $d$ & without Stokes & with Stokes \\ \hline
1 & 10 & 17\% (0.05s) & 0.0\% (0.03s) \\
2 & 10 & 35\% (0.09s) & 0.2\% (0.25s) \\
3 & 10 & 56\% (0.52s) & 0.3\% (1.19s) \\
4 & 10 & 72\% (9.74s) & 0.4\% (22.8s) \\
5 & 10 & 79\% (150s) & 0.6\% (669s) \\
\end{tabular}
\hspace{2em}
\begin{tabular}{c|c|c|c}
$n$ & $d$ & without Stokes & with Stokes \\ \hline
6 & 4 & 190\% (0.25s) & 45.1\% (1.03s) \\
7 & 4 & 203\% (0.32s) & 60.0\% (4.88s) \\
8 & 4 & 221\% (0.42s) & 78.6\% (8.45s) \\
9 & 4 & 245\% (1.15s) & 102\% (45.1s) \\
10 & 4 & 278\% (3.10s) & 131\% (176s) \\
\end{tabular}
\caption{Relative errors ($\%$) and computational times (in brackets in seconds) for solving the degree $d=10$ (left) and $d=4$ (right) moment relaxation approximating the volume of a ball of increasing dimensions $n$.\label{tab:ball}}
\end{table}

Higher dimensional problems can be addressed only if the problem description has some sparsity structure, as explained in \cite{twlh19}. Also, depending on the geometry of the problem, and for larger values of the relaxation degree, alternative polynomial bases may be preferable numerically than the monomial basis which is used by default in Moment and SOS parsers {(see \cite[Fig.4.5]{hls09})}.

{
\subsection{Changing the bounding box} \label{sec:lp}
Choosing the unit euclidean ball as our bounding box $\bB$ is the easiest and most standard choice, but one could wonder what happens if we take another set, for example an $\ell^p$ ball for $p > 2$. Let $\bB^n_p := \{\vx \in \R^n : \|\vx\|_p^p= \sum_{i=1}^n |x_i|^p \leq 1 \}$  denote the unit $\ell^p$ ball in dimension $n$.
We now compute the area of the bivariate disk $\bK = \{\vx \in \R^2 : (3/4)^2 - x_1^2 - x_2^2 \geq 0\}$ included in the unit $\ell^p$ ball $\bB = \bB^2_p$ for $p = 2,4,6,8,10$. To that end, we use the closed formula for the Lebesgue moments on $\bB^2_p$ (see Appendix \ref{app:lp}) :
$$ \int_{\bB^2_p} \vx^{\vk} \, d\vx = 0 \qquad \forall \vk \in \N^n \setminus (2\N)^n \qquad \text{and}$$
\begin{align*}
\int_{\bB^2_p} \vx^{2\vk} \, d\vx \
%& = \frac{4}{p^2} \Gamma\left(1+\frac{2+|2\vk|}{p}\right)^{-1} \Gamma\left(\frac{1+2k_1}{p}\right) \Gamma\left(\frac{1+2k_2}{p}\right)\\
%& = \frac{2}{(1+|\vk|) \ p} \Gamma\left(\frac{2+|2\vk|}{p}\right)^{-1} \Gamma\left(\frac{1+2k_1}{p}\right)\Gamma\left(\frac{1+2k_2}{p}\right) \\
& = \frac{2}{(1+|\vk|) \ p} \Beta\left(\frac{1+2k_1}{p} , \frac{1+2k_2}{p}\right)
\end{align*}
where $\Gamma(x) := \displaystyle\int_0^\infty \e^{-t} \ t^{x-1} \, dt$ and $\Beta(x,y) := \dfrac{\Gamma(x)\Gamma(y)}{\Gamma(x+y)}$ are Euler's Gamma and Beta functions, so that in particular one has $\Gamma(1+x) = x\ \Gamma(x)$.\\

\noindent We then perform the computations using Matlab's \texttt{beta} or \texttt{gamma} commands.  Our numerical results are reported in Table \ref{tab:lp}.
\begin{table}[h]\centering
\begin{tabular}{c|c|c|c}
$d$ & $p$ & without Stokes & with Stokes \\ \hline
8 & 2 & 39\% (0.13s) & 0.6\% (0.14s) \\
8 & 4 & 52\% (0.13s) & 1.2\% (0.14s) \\
8 & 6 & 57\% (0.13s) & 1.7\% (0.13s) \\
8 & 8 & 59\% (0.13s) & 2.0\% (0.14s) \\
8 & 10 & 61\% (0.13s) & 2.2\% (0.14s) \\
\end{tabular}
\hspace{2em}
\begin{tabular}{c|c|c|c|c}
$d$ & $p$ & without Stokes & with Stokes \\ \hline
16 & 2 & 27\% (0.47s) & 0.0\% (0.70s) \\
16 & 4 & 33\% (0.41s) & 0.0\% (0.52s) \\
16 & 6 & 36\% (0.33s) & 0.0\% (0.50s) \\
16 & 8 & 37\% (0.30s) & 0.0\% (0.53s) \\
16 & 10 & 38\% (0.35s) & 0.0\% (0.49s) \\
\end{tabular}
\caption{Relative errors ($\%$) and computational times (in brackets in seconds) for solving the degree $d=8$ (left) and $d=16$ (right) moment relaxations approximating the volume of ball of dimension $n=2$ embedded in an $\ell^p$ ball bounding box for $p=2,4,6,8,10$.\label{tab:lp}}
\end{table}

Unsurprisingly, as in the case of the euclidean bounding box, Stokes constraints drastically improve the accuracy of the moment relaxations.  However, the number $p$ has an influence on both accuracy (decreasing with $p$) and computational time (global tendency to slightly decrease with $p$) in the original as well as Stokes-augmented hierarchies. This can be explained by analyzing the influence of $p$ on the SOS strengthenings described in section \ref{sec:implem}: indeed, the only change is that $1 - |\vx|^2$ is replaced with $1 - \|\vx\|_p^p$ in the SOS representation of constraint $w \in \cC(\bB)_+$, so that $\sigma_1$ now has degree at most $d-p$ instead of $d-2$. Consequently, with increasing $p$, the size of the Gram matrix of $\sigma_1$ becomes smaller, slightly reducing the size of the corresponding SDP problem, which can result in a reduction of the computational time (although other factors impact the computational time, hence a non monotonic function of $p$). Conversely, as the degree of $\sigma_1$ is more limited, this brings less freedom in the search for an optimal solution, hence reducing the accuracy of the SOS strengthening for a fixed degree $d$. %This dependence of the hierarchy on the degree of the input polynomials was already discussed e.g. in {\color{red}\textbf{CITATIONS?}}.

In terms of the efficiency of Stokes constraints when the bounding box is an $\ell^p$ ball, we highlight the fact that the loss in accuracy is less important in the Stokes-augmented hierarchy than in the standard formulation: Stokes constraints are somewhat more robust to the increase in the degree of the polynomial describing the bounding box. Regarding computational times, when they decrease with $p$,  we observe that this decrease is more important with Stokes constraints than without: here again, Stokes constraints lead to a better behaved hierarchy.

\subsection{Other sets}

So far we only computed the volume of euclidean balls. In order to further explore the influence of the input polynomials on the efficiency of Stokes constraints as well as the Moment-SOS hierarchy in general, we now switch to computing the volume of more sophisticated semi-algebraic sets in two dimensions: first, we proceed from the euclidean ball to generic $\ell^p$ balls, as we did with the bounding box; second, we test the limits of the scheme by approximating the volume of a non-convex, non-connected double disk.

\subsubsection{$\ell^4$ disk} \label{sec:lpdisk}

As discussed in section \ref{sec:lp}, the degree of the polynomials involved in the Moment-SOS hierarchy has a direct influence on its accuracy, as a higher input degree means, for a fixed degree of the hierarchy, less degrees of freedom to optimize over. We now show on one example what the practical influence of the degree over our scheme's accuracy is, by computing the approximate volume of the $\ell^4$ disk:

$$\bK := \left\{\vx \in \R^2 : \left(25/72\right)^4 - x_1^4 - x_2^4 \geq 0 \right\}. $$

It is clear that one has $\lambda(\bK) =  \left(25/72\right)^2 \lambda(\bB_4^2)$ with, using formula \eqref{eq:mompball} from Appendix \ref{app:lp}, $\lambda(\bB_4^2) =
% \frac{2^2}{4^2} \Gamma\left(1+\frac{2}{4}\right)^{-1} \Gamma\left(\frac{1}{4}\right)^2 = \frac{1}{4} \, 2 \ \Gamma\left(\frac{1}{2}\right)^{-1}\Gamma(\frac{1}{4})^2 = 
\frac{\Gamma(1/4)^2}{2\sqrt{\pi}}$
so that $\lambda(\bK) \approx 0.4471$. We implement the degree 16 SOS strengthenings corresponding to the standard and Stokes-augmented problems, and plot the resulting $w$ in Figure \ref{fig:lpdisk}.

\begin{figure}[h]  \centering
\includegraphics[width=0.495\textwidth]{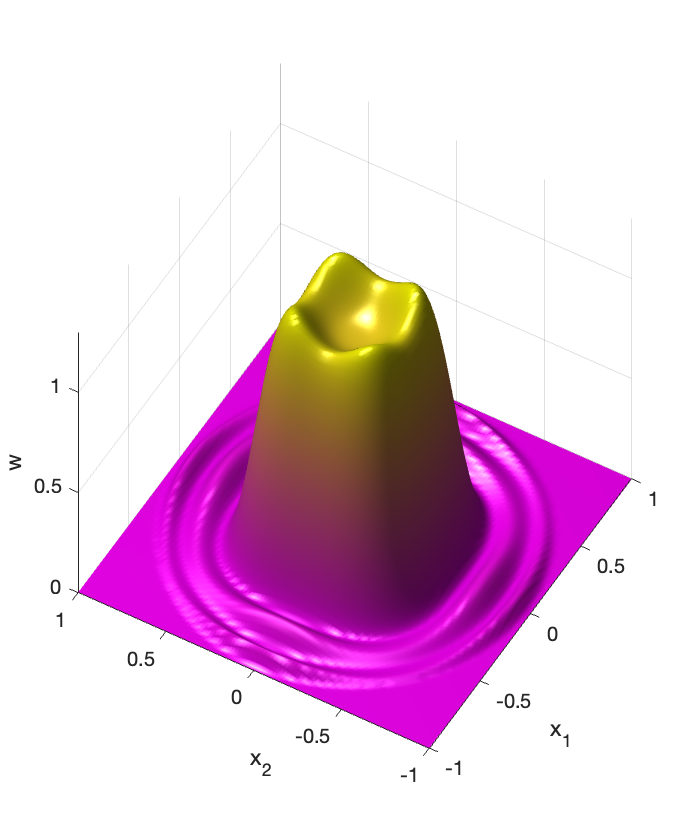}
\includegraphics[width=0.495\textwidth]{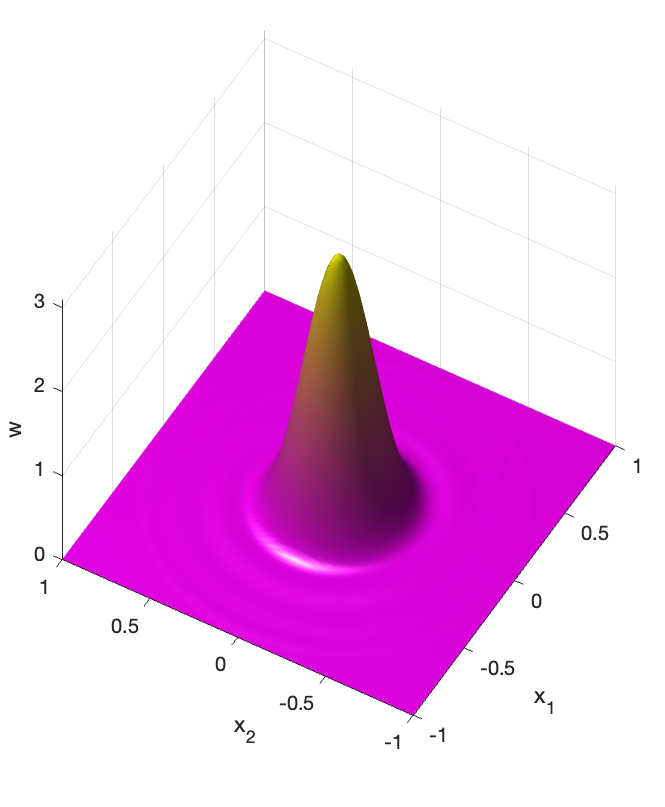}
\caption{Degree 16 polynomial approximations of the area of the $\ell^4$ disk obtained without Stokes constraints (left) and with Stokes constraints (right).\label{fig:lpdisk}}
\end{figure}

Again, the original SOS strengthening is flawed by a strong Gibbs phenomenon that introduces a large error in the volume approximation (we get a bound of $0.8511$, i.e. a relative error of $90 \%$), characterized by wide oscillations on the boundary of the $\ell^4$ disk $\bK$. The Stokes-augmented version gives a tighter bound of $0.4653$ (relative error $4 \%$, still more than for the euclidean disk, but much less than without Stokes constraints). Moreover, an interesting feature appears here that was not visible on Figure \ref{fig:disk} in the case of the euclidean disk: we observe small oscillations of $w$ around $0$ on $\bB \setminus \bK$. This can be expected as $w$ is a non-zero polynomial, so it cannot vanish on a set of positive Lebesgue measure.  These observations confirm our predictions that the lower the degree of the involved polynomials, the more accurate the SDP relaxations. However, we are now going to show that some other parameters should be considered when discussing the accuracy of the Moment-SOS hierarchy for volume computation, such as the geometry of the considered set $\bK$.

\subsubsection{Disconnected double disk} \label{sec:ddisk}

We finally test our numerical scheme on a non-convex, non-connected semi-algebraic set:
$$ \bK := \left\{\vx \in \R^2 : \left(1/16 - (x_1 - 1/2)^2 - x_2^2\right)\left((x_1 + 1/2)^2 + x_2^2 - 1/16\right) \geq 0 \right\}.$$

\begin{figure}[h]  \centering
\includegraphics[width=0.495\textwidth]{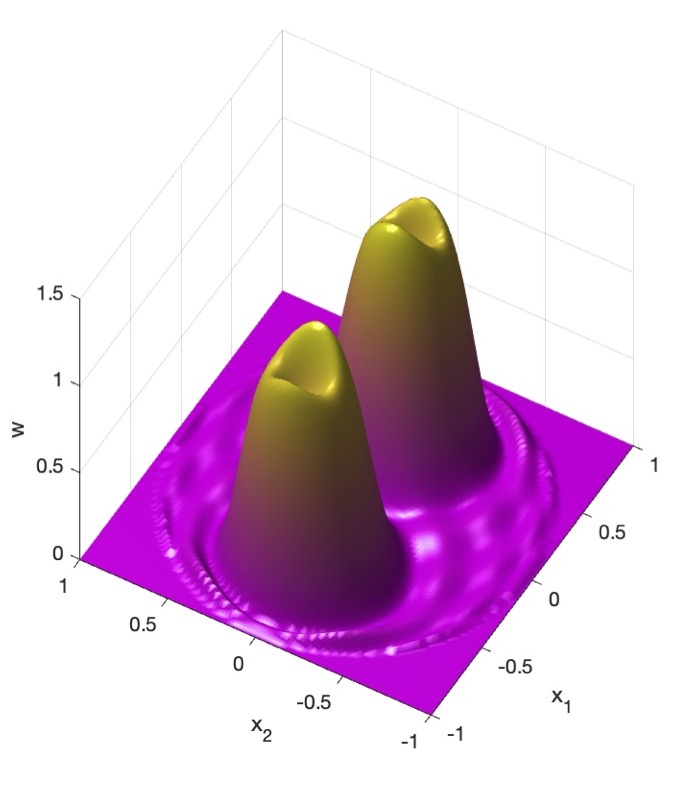}
\includegraphics[width=0.495\textwidth]{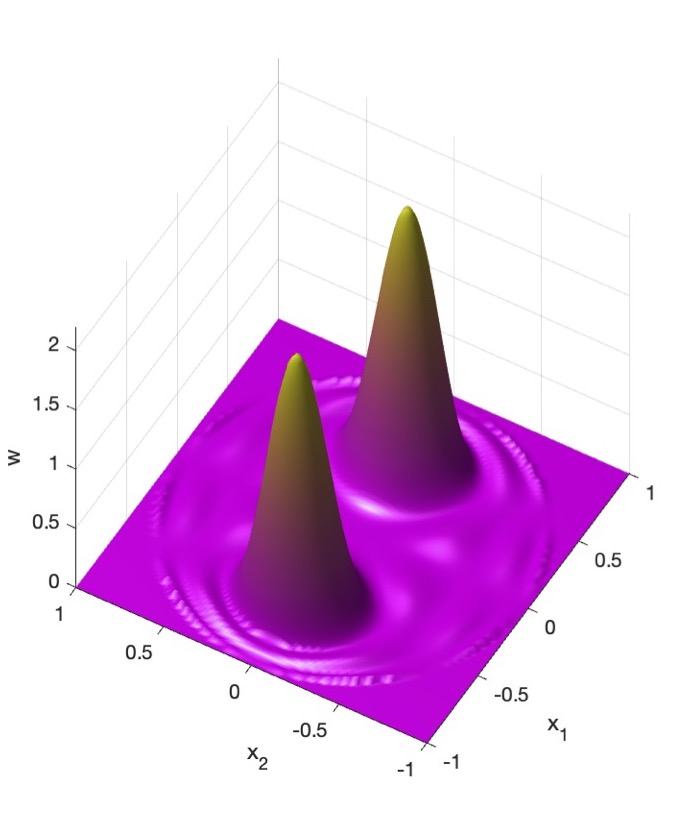}
\caption{Degree 16 polynomial approximations of the area of the double disk obtained without Stokes constraints (left) and with Stokes constraints (right).\label{fig:doubledisk}}
\end{figure}

As usual, on Figure \ref{fig:doubledisk}  we observe a Gibbs phenomenon in the standard volume approximation scheme (with a bound of $0.8551$ instead of $\frac{\pi}{8} \approx 0.3927$, i.e. a relative error of $118 \%$), as well as wide oscillations near the boundary of $\bK$. As for the Stokes-augmented scheme, again we get a better bound of $0.4671$ (relative error $19 \%$, interestingly higher than in all the previous cases with Stokes constraints, but still much more accurate than without Stokes constraints). More striking here, even in the Stokes-augmented SOS strengthening, $w$ is seen clearly oscillating. However, this should not be mistaken for a consequence of the Gibbs phenomenon, as in this new formulation $w$ is proved to approximate a Lipschitz continuous function. As a consequence to the Stone-Weierstrass theorem, those remaining oscillations are bound to ultimately vanish as the degree $d$ goes to infinity, while in the case of the Gibbs phenomenon, the oscillations do not ultimately disappear (only their contribution to $\int w \; d\lambda$ ultimately vanishes).  

A possible explanation for this oscillatory phenomenon is that, despite the regularity of the optimizer in the infinite dimensional problem \eqref{lp:jean_dual}, the SOS strengthenings are still very demanding for the polynomial $w$: indeed, it is requested to be as close to $0$ as possible outside $\bK$ while being sufficiently large in $\bK$ so that its integral is bigger than $\lambda(\bK)$. In the case of a non-connected $\bK$, $w$ is thus literally requested to oscillate. 

To conclude on this example, we highlight the fact that, in addition to the degree of the polynomial $g$ defining $\bK$, the geometry of $\bK$ (typically: its number of connected components and how they are distributed in the bounding box $\bB$) plays a key role in the accuracy of the Moment-SOS hierarchy for computing its volume, both in the original and Stokes-augmented versions. Indeed, it is this geometry that is likely to generate (or, on the contrary, prevent) an oscillatory behavior in the approximating polynomial $w$, when one gets rid of the Gibbs phenomenon by complementing the hierarchy with Stokes constraints. 
This is particularly visible when comparing our examples in Sections \ref{sec:lpdisk} and \ref{sec:ddisk}, where $\bK$ is described by degree $4$ polynomials, but the schemes are far more accurate in the convex case than in the disconnected case, especially when one adds Stokes constraints.
%This feature is quite new as the original volume computation schemes did not take geometry into account as it is done here through Stokes constraints, and thus were equally lacking accuracy in the connected and non-connected cases. 
}

\section{Conclusion}

In this paper we proposed a new primal-dual infinite-dimensional linear formulation of the problem of computing the volume of a smooth semi-algebraic set generated by a single polynomial, generalizing the approach of \cite{hls09} while still allowing the application of the Moment-SOS hierarchy. The new dual formulation contains redundant linear constraints arising from Stokes's Theorem, generalizing the heuristic of \cite{l17}. A striking property of this new formulation is that the dual value is attained, contrary to the original formulation. As a consequence, the corresponding dual SOS hierarchy does not suffer from the Gibbs phenomenon, thereby accelerating the convergence.

Numerical experiments (not reported here) reveal that the values obtained with the new Stokes constraints (with a general vector field) are closely matching the values obtained with the original Stokes constraints of \cite{l17} (with the generating polynomial factoring the vector field). It may be then expected that the original and new Stokes constraints are equivalent. However at this stage we have not been able to prove equivalence.

The proof of dual attainment builds upon classical tools from linear PDE analysis, thereby building up a new bridge between infinite-dimensional convex duality and PDE theory, in the context of the Moment-SOS hierarchy. We expect that these ideas can be exploited to prove regularity properties of linear reformulations of other problems in data science, beyond volume approximation. For example, it would be desirable to design Stokes constraints tailored to the infinite-dimensional linear reformulation of the region of attraction problem \cite{hk14} or its sparse version \cite{tchl20}.

{ In terms of practical implementation, while still observed with Stokes constraints, the dependence on the degree of the input polynomials, already discussed in \cite{hls09}, seems to be of less importance.  However, the dependence in the geometry of $\bK$ now seems to prevail, as Stokes constraints add information on this geometry; more precisely, the simpler the geometry, the more efficient the constraints: the smooth and convex case leads to the best increase in accuracy, but the dual attainment still holds even on disconnected smooth sets. Also, experiments carried out in \cite{l17,twlh19} show that even in the non-smooth case, Stokes constraints drastically improve the accuracy of the volume computing scheme.}

\appendix

{

\section{Moments of the Lebesgue measure on an $\ell^p$ ball} \label{app:lp}

For our numerical experiments, we use closed formulae for the moments of the Lebesgue measure on an $\ell^p$ ball. These formulae can be derived in a quite straightforward fashion using \cite[\sc Theorem 2.2]{l16}. % To that end, we just introduce the following definition:
\begin{dfn}[Positively homogeneous functions]
For $d \in \R$, $h : \R^n \longrightarrow \R_+$ is said to be {positively homogeneous of degree} $d$ if for all $\vx \in \R^n \setminus \{\vO\}$, $\lambda > 0$, one has
$$ h(\lambda \vx) = \lambda^d h(\vx). $$
\end{dfn}
\begin{lem}[Lebesgue moments and positively homogeneous functions, {\sc Theorem 2.2} in \cite{l16}]\label{lem:momhf}
Let $h : \R^n \longrightarrow \R$ be a positively homogeneous function of degree $d \in \R \setminus \{0\}$. Let $\bB_h := \{\vx \in \R^n : h(\vx) \leq 1 \}$ be such that $\lambda(\bB_h) < \infty$. Then, for all $\vk \in \N^n$,
\begin{equation}\label{eq:momhf}
\int_{\bB_h} \vx^\vk \, d\vx = \Gamma\left(1+\frac{n+|\vk|}{d}\right)^{-1} \int_{\R^n} \vx^\vk \, \e^{-h(\vx)} \, d\vx.
\end{equation}
\end{lem}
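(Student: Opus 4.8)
The plan is to prove \eqref{eq:momhf} by a layer-cake (Fubini) argument combined with the scaling behaviour of $h$. Assume first $d>0$; the case $d<0$ is entirely analogous and is discussed at the end. Throughout, $h$ is assumed measurable (so that $\bB_h$ and all integrals make sense) and nonnegative. The starting point is to write, for every $\vx\in\R^n$,
$$\e^{-h(\vx)} = \int_{h(\vx)}^\infty \e^{-s}\,ds = \int_0^\infty \e^{-s}\,\ind_{\{h(\vx)\le s\}}\,ds,$$
and to insert this into the right-hand side of \eqref{eq:momhf}:
$$\int_{\R^n} \vx^\vk\,\e^{-h(\vx)}\,d\vx = \int_{\R^n}\int_0^\infty \vx^\vk\,\e^{-s}\,\ind_{\{h(\vx)\le s\}}\,ds\,d\vx.$$
I would first apply Tonelli's theorem to the nonnegative integrand $(\vx,s)\mapsto |\vx^\vk|\,\e^{-s}\,\ind_{\{h(\vx)\le s\}}$ (the scaling computation below, run for $|\vx^\vk|$, shows this double integral equals $\Gamma(1+\tfrac{n+|\vk|}{d})\int_{\bB_h}|\vy^\vk|\,d\vy$, hence is finite under the stated hypothesis); Fubini then legitimately applies to the signed integrand and yields
$$\int_{\R^n} \vx^\vk\,\e^{-h(\vx)}\,d\vx = \int_0^\infty \e^{-s}\left(\int_{\{\vx : h(\vx)\le s\}} \vx^\vk\,d\vx\right)ds.$$

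The key step is the evaluation of the inner integral via positive homogeneity. For $s>0$ the dilation $\vx\mapsto s^{1/d}\vx$ is a bijection of $\R^n$, and since $h(s^{1/d}\vx) = (s^{1/d})^d h(\vx) = s\,h(\vx)$ it maps $\bB_h=\{h\le 1\}$ onto $\{h\le s\}$. Performing the change of variables $\vx = s^{1/d}\vy$ (Jacobian $s^{n/d}$, and $\vx^\vk = s^{|\vk|/d}\vy^\vk$) gives
$$\int_{\{h\le s\}} \vx^\vk\,d\vx = s^{(n+|\vk|)/d}\int_{\bB_h} \vy^\vk\,d\vy.$$
Substituting this back and recognizing the Gamma integral $\int_0^\infty \e^{-s}\,s^{(n+|\vk|)/d}\,ds = \Gamma\!\left(1+\tfrac{n+|\vk|}{d}\right)$ (valid because $(n+|\vk|)/d > -1$ when $d>0$), we obtain
$$\int_{\R^n} \vx^\vk\,\e^{-h(\vx)}\,d\vx = \Gamma\!\left(1+\frac{n+|\vk|}{d}\right)\int_{\bB_h} \vy^\vk\,d\vy,$$
which is \eqref{eq:momhf} after dividing by the finite, nonzero Gamma factor.

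For $d<0$ the same computation goes through verbatim: $h\ge0$ is still positively homogeneous, $h(s^{1/d}\vx)=s\,h(\vx)$ still holds for $s>0$, the dilation $\vx\mapsto s^{1/d}\vx$ is still a bijection of $\R^n$ with Jacobian $s^{n/d}$, and the identity $\{h\le s\}=s^{1/d}\bB_h$ is unchanged — even though $\bB_h$ is now an unbounded region, this plays no role in the scaling argument. The only point requiring care is convergence of the $s$-integral near $0$: $\int_0^\infty \e^{-s} s^{(n+|\vk|)/d}\,ds$ converges exactly when $(n+|\vk|)/d>-1$, i.e. when $n+|\vk|<|d|$, which is precisely the condition under which both sides of \eqref{eq:momhf} are finite, so no generality is lost.

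The main obstacle — really the only delicate point — is the justification of the interchange of integrals when $\vx^\vk$ changes sign (some $k_i$ odd, with no assumed symmetry of $h$) together with the attendant integrability bookkeeping. As indicated above, this is handled by first carrying out the whole argument for $|\vx^\vk|$ via Tonelli, which simultaneously establishes absolute integrability of the double integral and evaluates it; Fubini then applies to the signed version without further comment, and the rest is the elementary scaling computation.
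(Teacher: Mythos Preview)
The paper does not supply its own proof of this lemma: it is quoted as {\sc Theorem 2.2} of \cite{l16} and then used as a black box in the proof of Lemma~\ref{lem:mompball}. So there is nothing in the paper to compare your argument against.

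Your proof is the standard and correct one --- the layer-cake representation $\e^{-h(\vx)}=\int_0^\infty \e^{-s}\ind_{\{h(\vx)\le s\}}\,ds$, followed by Fubini and the scaling identity $\{h\le s\}=s^{1/d}\bB_h$ from positive homogeneity, is exactly how this kind of result is established. One small point: you assert that Tonelli on $|\vx^\vk|$ yields a finite quantity ``under the stated hypothesis,'' but $\lambda(\bB_h)<\infty$ by itself does not force $\int_{\bB_h}|\vy^\vk|\,d\vy<\infty$ when $\bB_h$ is unbounded (a thin cone where $h$ vanishes can make higher moments diverge while keeping the volume finite). In the paper's only use of the lemma, $h=\|\cdot\|_p^p$ and $\bB_h$ is compact, so this is harmless; in full generality one should either add that hypothesis or read \eqref{eq:momhf} as an equality in $[0,+\infty]$ when $\vx^\vk\ge 0$. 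Your discussion of $d<0$ is also largely moot: for $d<0$ and nontrivial $h\ge 0$, the sublevel set $\{h\le 1\}$ contains a neighbourhood of infinity and hence has infinite Lebesgue measure, so the hypothesis is essentially vacuous there.
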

From this we deduce the following closed formula for moments of the Lebesgue measure on an $\ell^p$ ball:
\begin{lem}[Lebesgue moments on the unit $\ell^p$ ball] \label{lem:mompball}
Let $p > 1$. The even moments of the Lebesgue measure on $\bB^n_p$ are given, for $\vk = (k_1,\ldots,k_n) \in (2\N)^n$, by
\begin{equation}\label{eq:mompball}
\int_{\bB^n_p} \vx^\vk \, d\vx = \frac{2^n}{p^n} \Gamma\left(1+\frac{n+|\vk|}{p}\right)^{-1}\prod\limits_{i=1}^n\Gamma\left(\frac{1+k_i}{p}\right).
\end{equation}
The odd moments are $\int_{\bB^n_p} \vx^\vk \, d\vx = 0$ for $\vk \in \N^n \setminus (2\N)^n$.
\end{lem}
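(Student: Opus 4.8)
The plan is to apply Lemma \ref{lem:momhf} to the positively homogeneous function $h(\vx) := \|\vx\|_p^p = \sum_{i=1}^n |x_i|^p$. First I would verify the hypotheses: for $\lambda > 0$ one has $h(\lambda\vx) = \sum_i |\lambda x_i|^p = \lambda^p \sum_i |x_i|^p = \lambda^p h(\vx)$, so $h$ is positively homogeneous of degree $d = p \ne 0$; and $\bB_h = \{\vx \in \R^n : h(\vx) \le 1\} = \bB^n_p$ is bounded, hence $\lambda(\bB^n_p) < \infty$. Thus Lemma \ref{lem:momhf}, i.e. formula \eqref{eq:momhf}, applies and gives, for every $\vk \in \N^n$,
\[
\int_{\bB^n_p} \vx^\vk \, \md\vx \;=\; \Gamma\!\left(1 + \frac{n+|\vk|}{p}\right)^{-1} \int_{\R^n} \vx^\vk \, \e^{-\|\vx\|_p^p} \, \md\vx .
\]

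Next I would evaluate the Gaussian-type integral on the right-hand side. Since $\e^{-\|\vx\|_p^p} = \prod_{i=1}^n \e^{-|x_i|^p}$, Tonelli's theorem factorizes it as $\int_{\R^n} \vx^\vk\,\e^{-\|\vx\|_p^p}\,\md\vx = \prod_{i=1}^n \int_\R x^{k_i}\,\e^{-|x|^p}\,\md x$. Each one-dimensional factor with $k_i$ odd vanishes by oddness of the integrand, which immediately yields $\int_{\bB^n_p} \vx^\vk\,\md\vx = 0$ for $\vk \in \N^n \setminus (2\N)^n$, the second claim of the lemma. For $k_i$ even I would write $\int_\R x^{k_i}\e^{-|x|^p}\,\md x = 2\int_0^\infty x^{k_i}\e^{-x^p}\,\md x$ and perform the change of variables $t = x^p$, i.e. $x = t^{1/p}$ and $\md x = \tfrac1p\,t^{1/p-1}\,\md t$, obtaining
\[
2\int_0^\infty x^{k_i}\e^{-x^p}\,\md x \;=\; \frac{2}{p}\int_0^\infty t^{\frac{k_i+1}{p}-1}\,\e^{-t}\,\md t \;=\; \frac{2}{p}\,\Gamma\!\left(\frac{1+k_i}{p}\right),
\]
where the last equality is the definition of $\Gamma$ and is legitimate because the integrand $t^{(k_i+1)/p - 1}\e^{-t}$ is integrable at $0$ (since $(k_i+1)/p > 0$) and at $+\infty$ (since $p>1$).

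Finally I would multiply the factors: for $\vk \in (2\N)^n$,
\[
\int_{\R^n}\vx^\vk\,\e^{-\|\vx\|_p^p}\,\md\vx \;=\; \prod_{i=1}^n \frac{2}{p}\,\Gamma\!\left(\frac{1+k_i}{p}\right) \;=\; \frac{2^n}{p^n}\prod_{i=1}^n \Gamma\!\left(\frac{1+k_i}{p}\right),
\]
and substituting this back into the identity from Lemma \ref{lem:momhf} above yields exactly \eqref{eq:mompball}. There is essentially no obstacle in this argument: it is a direct application of a cited result followed by a routine factorization and a single change of variables. The only points requiring care are checking the degree-$p$ homogeneity and the finiteness $\lambda(\bB^n_p) < \infty$ so that Lemma \ref{lem:momhf} is applicable, and confirming convergence of the integral defining $\Gamma\big(\tfrac{1+k_i}{p}\big)$, which is where the standing hypothesis $p>1$ (indeed $p>0$ would suffice) is used.
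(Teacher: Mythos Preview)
Your proof is correct and follows essentially the same approach as the paper: apply Lemma~\ref{lem:momhf} with $h(\vx)=\|\vx\|_p^p$, factorize the exponential integral over $\R^n$ into one-dimensional integrals, and evaluate those. The only minor difference is that for the one-dimensional integral $\int_\R x^k \e^{-|x|^p}\,\md x$ the paper reapplies Lemma~\ref{lem:momhf} in dimension one (yielding $\Gamma(1+\tfrac{1+k}{p})\cdot\tfrac{2}{k+1}$ and then simplifying via $\Gamma(1+x)=x\Gamma(x)$), whereas you compute it directly by the substitution $t=x^p$; both routes give $\tfrac{2}{p}\Gamma(\tfrac{1+k}{p})$.
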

\begin{proof}
Let $\vk \in \N^n$. Since $\vx \mapsto \|\vx\|_p^p$ is positively homogeneous of degree $p$, we can use Lemma \ref{lem:momhf}:
\begin{equation} \label{apeq:a} \tag{a}
\int_{\bB^n_p} \vx^\vk \, d\vx = \Gamma\left(1+\frac{n+|\vk|}{p}\right)^{-1} \int_{\R^n} \vx^\vk \, \e^{-\|\vx\|_p^p} \, d\vx. 
\end{equation}
Then, Fubini's theorem ensures that 
\begin{equation} \label{apeq:b} \tag{b}
\int_{\R^n} \vx^\vk \, \e^{-\|\vx\|_p^p} \, d\vx = \prod\limits_{i=1}^n \int_{-\infty}^{+\infty} x_i^{k_i} \, \e^{-|x_i|^p}\,dx_i.
\end{equation}
Then, if one of the $k_i$ is odd, then the corresponding $x_i \mapsto x_i^{k_i}\,\e^{-|x_i|^p}$ is an odd function, so that its integral over $\R$ is $0$. Thus, this yields that $\int_{\bB_p^n} \vx^{\vk} \, d\vx = 0$. 
Finally, we compute for $k \in 2\N$
\begin{align}
\int_{-\infty}^{+\infty}x^k \, \e^{-|x|^p} \, dx \ & \stackrel{\text{\ref{lem:momhf}}}{=} \Gamma\left(1+\frac{1+k}{p}\right) \int_{-1}^1 x^k \, dx \nonumber \\
& = \Gamma\left(1+\frac{1+k}{p}\right) \, \frac{2}{k+1} \nonumber \\
& = \frac{2}{\cancel{k+1}} \, \frac{\cancel{1+k}}{p} \, \Gamma\left(\frac{1+k}{p}\right) \label{apeq:c} \tag{c}
\end{align}
where we used the factorial property $\Gamma(1+x) = x \, \Gamma(x)$. \eqref{eq:mompball} is then obtained by combining equations \eqref{apeq:a}, \eqref{apeq:b} and \eqref{apeq:c}.
\end{proof}
}

\end{document}